\newcommand{\qi}{\mathbf{i}}
\newcommand{\qj}{\mathbf{j}}
\newcommand{\qk}{\mathbf{k}}
\newcommand{\Cj}[1]{{#1}^\ast}
\newcommand{\SPQ}{\mathbb{S}}
\newcommand{\R}{\mathbb{R}}
\newcommand{\C}{\mathbb{C}}
\newcommand{\SO}[1][3]{\mathrm{SO}(#1)}
\newcommand{\SE}[1][3]{\mathrm{SE}(#1)}
\newcommand{\N}{\mathcal{N}}
\renewcommand{\P}{\mathbb{P}}
\DeclareMathOperator{\RE}{Re}
\DeclareMathOperator{\IM}{Im}
\newcommand{\Scalar}[1]{\RE(#1)}
\newcommand{\Vector}[1]{\IM(#1)}
\newtheorem{thm}{Theorem}[section]
\newtheorem{lem}[thm]{Lemma}
\newtheorem{cor}[thm]{Corollary}
\theoremstyle{definition}
\newtheorem{defn}[thm]{Definition}
\theoremstyle{remark}
\newtheorem{rmk}[thm]{Remark}
\newtheorem{example}[thm]{Example}
\title[Quadratic Split Quaternion Polynomials]{Quadratic Split Quaternion Polynomials: Factorization and Geometry}
\date{\today}
\author{Daniel F. Scharler \and Johannes Siegele \and Hans-Peter Schröcker}
\address{Department of Basic Sciences in Engineering Sciences, University of Innsbruck, Technikerstr.~13, 6020 Innsbruck, Austria}
\email{daniel.scharler@uibk.ac.at}
\email{johannes.siegele@uibk.ac.at}
\email{hans-peter.schroecker@uibk.ac.at}
\keywords{skew polynomial ring, null quadric, Clifford translation, left/right
  ruling, zero divisor, projective geometry, non-Euclidean geometry}
\subjclass[2010]{
  12D05, 16S36, 51M09, 51M10, 70B10  }
\begin{document}

\begin{abstract}
    We investigate factorizability of a quadratic split quaternion polynomial. In
  addition to inequality conditions for existence of such factorization, we
  provide lucid geometric interpretations in the projective space over the split
  quaternions.
 \end{abstract}

\maketitle

\section{Introduction}

Quaternions and dual quaternions provide compact and simple parametrizations
for the groups $\SO$, $\SE[2]$ and $\SE[3]$. This accounts for their importance
in fields such as kinematics, robotics and mechanism science. In this context,
polynomials over quaternion rings in one indeterminate can be used to
parameterize rational motions. Factorization of polynomials corresponds to the
decomposition of a rational motion into rational motions of lower degree. Since
linear factors generically describe rotational motions, factorizations with
linear factors give rise to a sequence of revolute joints from which mechanisms
can be constructed \cite{hegedus13}.

In recent years, the theory of quaternion polynomial factorization
\cite{niven41,gordon65} has been extended to the dual quaternion case and
numerous applications have been found \cite{li15b,li15c,li18b}. The main
difficulty in comparison with the purely quaternion theory is the presence of
zero divisors. As of today our general understanding of dual quaternion
factorization is quite profound but some questions still remain. Most notably, a
complete characterization of all polynomials that admit factorizations with only
linear factors and algorithms for computing them are still missing. Both exist
for ``dense'' classes of dual quaternion polynomials \cite{li15b}.

A first step in research on factorizability and factorization algorithms of
polynomials should be the investigation of quadratic polynomials. This has been
done for quaternions in \cite{huang02} and for split quaternions in
\cite{cao19}. Results for generalized quaternions, including split quaternions,
can also be found in \cite{abrate09}. The generic case is subsumed in a generic
factorization theory as in \cite{li15b,li18a} while special cases still allow a
complete discussion.

In this article, we consider quadratic left polynomials over the split
quaternions. Factorization results for these polynomials are among the topics of
\cite{abrate09} and \cite{cao19}. We present different characterizations,
tailored towards later geometric interpretation of factorizability, based on the
geometry of split quaternions. It is much clearer than the inequality criteria
with their numerous case distinctions that had been known so far (c.~f.
Theorem~\ref{th:geometric-characterization},
Corollary~\ref{cor:independent} and Corollary~\ref{cor:vanishing}). Moreover, we also
use our criteria for covering polynomials with non-invertible leading
coefficient which hitherto have not been dealt with.

Factorization of quadratic polynomials over the split quaternions is interesting
from a purely algebraic viewpoint because, like dual quaternions but unlike
ordinary quaternions, the ring of split quaternions contains zero divisors.
Their structure is more involved than in the dual quaternion case but,
nonetheless, allows a reasonably simple computational treatment and a nice
geometric interpretation. It is also relevant in hyperbolic kinematics
\cite{li18a} and isomorphic to the fundamental algebra of real $2 \times 2$
matrices.

The remainder of this article is structured as follows: Following a presentation
of basic results on split quaternions and their geometry in
Section~\ref{sec:preliminaries}, our main results are given in
Section~\ref{sec:factorization-results}. We first derive our own inequality
conditions and their geometric interpretation for the cases of dependent and
independent coefficients under the assumption that the norm polynomial does not
vanish in Sections~\ref{sec:factor-monic-dependent} and
\ref{sec:factor-monic-independent}. A geometric interpretation of these cases is
given in Section~\ref{sec:geometric-interpretation}. The remaining case of
vanishing norm polynomial is the topic of the concluding
Section~\ref{sec:factor-non-invertible}.

\section{Preliminaries}
\label{sec:preliminaries}

\subsection{Split Quaternions and Split Quaternion Polynomials}

The algebra of split quaternions, denoted by $\SPQ$, is generated by the
quaternion units $\qi$, $\qj$ and $\qk$ over the real numbers $\R$. An element
$h \in \SPQ$ is given by $h = h_0 + h_1 \qi + h_2 \qj + h_3 \qk$, where $h_0$,
$h_1$, $h_2$, $h_3 \in \R$ are real numbers. The multiplication of split
quaternions is defined by the relations
\begin{align*}
  \qi^2 = - \qj^2 = - \qk^2 = -\qi \qj \qk = -1.
\end{align*}
From this, the complete multiplication table may be inferred and one finds that
the basis elements anti-commute, e.g. $\qi\qj = -\qj \qi$. The split quaternion
\emph{conjugate} to $h = h_0 + h_1 \qi + h_2 \qj + h_3 \qk$ is defined as
$\Cj{h} \coloneqq h_0 - h_1 \qi - h_2 \qj - h_3 \qk$. Conjugation of split
quaternions $h \mapsto \Cj{h}$ is an anti-automorphism, i.e. $\Cj{(hg)} = \Cj{g}
\Cj{h}$ for $h$, $g \in \SPQ$. The split quaternion norm is defined by $h \Cj{h}
= \Cj{h} h = h_0^2 + h_1^2 - h_2^2 - h_3^2 \in \R$. A split quaternion $h$ is
invertible if and only if $h\Cj{h} \neq 0$ in which case $h^{-1} = (h
\Cj{h})^{-1} \Cj{h}$. The \emph{scalar} or \emph{real part} of $h \in \SPQ$ is
$\Scalar{h} \coloneqq \frac{1}{2}(h + \Cj{h}) = h_0$, the \emph{vector} or
\emph{imaginary part} is $\Vector{h} \coloneqq \frac{1}{2}(h - \Cj{h}) = h_1\qi
+ h_2\qj + h_3\qk$. The split quaternion $h$ is called \emph{vectorial} if
$\Scalar{h} = 0$.

By $\SPQ[t]$ we denote the ring of polynomials in one indeterminate $t$ with
split quaternion coefficients. Addition is done in the usual way; multiplication
is defined by the convention that the indeterminate $t$ commutes with all
coefficients in $\SPQ$. This is motivated by applications in hyperbolic
kinematics \cite{li18a} where $t$ serves as a real motion parameter that,
indeed, is in the center of $\SPQ$. Consider a left polynomial $P = \sum_{\ell =
  0}^n p_\ell t ^\ell \in \SPQ[t]$ (coefficients are written to the left hand
side of the indeterminate $t$). The \emph{conjugate polynomial} $\Cj{P}
\coloneqq \sum_{\ell = 0}^n \Cj{p}_{\ell} t^\ell$ is obtained by conjugation of
the coefficients. Hence, the norm polynomial $P \Cj{P} = \Cj{P} P \in \R[t]$ is
real. The \emph{evaluation} of $P$ at $h \in \SPQ$ is defined by $P(h) \coloneqq
\sum_{\ell = 0}^n p_\ell h ^\ell$. One calls it a \emph{right evaluation}
because the variable $t$ is written to the right hand side of the coefficients
and then substituted by $h$. To illustrate the substantial difference to the
left evaluation (of right polynomials) where the variable $t$ is written to the
left hand side of the coefficients, consider the polynomial $h_1 t = t h_1 \in
\SPQ[t]$ ($t$ commutes with $h_1$) and a split quaternion $h_2 \in \SPQ$. Right
evaluation of $h_1 t$ at $h_2$ yields $h_1 h_2$ whereas left evaluation of $t
h_1$ yields $h_2 h_1$. The results are different unless $h_1$ and $h_2$ commute.

Due to non-commutativity of split quaternion multiplication we have to differ
between right and left factors and zeros of a polynomial as well. Consider two
split quaternion polynomials $P$, $F \in \SPQ[t]$. We call $F$ a right factor of
$P$ if there exists a polynomial $Q \in \SPQ[t]$ such that $P = Q F$. A right
zero $h$ of a left polynomial $P$ is defined by the property that the right
evaluation of $P$ at $h$ vanishes. Left factors and left zeros are defined
analogously. In this paper we mainly deal with left polynomials, right
evaluation, right factors and right zeros but often simply speak of polynomials,
evaluation, factors and zeros, respectively. Of course, there exists a symmetric
theory on right polynomials and left evaluation, factors and zeros.

\subsection{Geometry of Split Quaternions}

In this section we take a look at the geometry of split quaternions which, as we
shall see, is closely related to factorizability of split quaternion
polynomials. In particular, the symmetric bilinear form
\begin{equation*}
  q\colon \SPQ \times \SPQ \to \R,
  \quad
  (h,g) \mapsto \frac{1}{2}(h \Cj{g} + g \Cj{h})
\end{equation*}
will play a vital role. Since it is of signature $(2,2)$, the real
four-dimensional vector-space $\SPQ$ together with $q$ is a
\emph{pseudo-Euclidean space.} Its \emph{null cone} consists of all split
quaternions $h$ that satisfy $q(h,h) = 0$. Because of $q(h,h) = h\Cj{h}$, these
are precisely the split quaternions of vanishing norm.

Some aspects of polynomial factorization over split quaternions have a geometric
interpretation in this pseudo-Euclidean space while others are of projective
nature. Hence, we also consider the projective space $\P(\SPQ)$ over $\SPQ$. Any
vector $h \in \SPQ \setminus \{0\}$ represents a point in $\P(\SPQ)$ which we
denote by $[h]$. Projective span is denoted by the symbol ``$\vee$'', i.e., the
straight line spanned by two different points $[h_1]$, $[h_2] \in \P(\SPQ)$ is
$[h_1] \vee [h_2]$.

\begin{defn}
  \label{def:null_quadric}
  The quadric $\N$ in $\P(\SPQ)$ represented by the symmetric bilinear form $q$
  is called the \emph{null quadric}. Lines contained in $\N$ are called
  \emph{null lines}.
\end{defn}

Because the signature of $q$ is $(2,2)$, the null quadric $\N$ is of hyperbolic
type and two families of null lines do exist. We illustrate the importance of
$\N$ to the algebra of split quaternions by a few results which we will need
later.

\begin{lem}[\cite{li18a}]
  \label{lem:null-lines}
  Let $r_0$, $r_1 \in \SPQ$ be two linearly independent split quaternions. The
  straight line $[r_0] \vee [r_1]$ is a null line if and only if the polynomial
  $R = r_1 t + r_0$ satisfies $R \Cj{R} = 0$.
\end{lem}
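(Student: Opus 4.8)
The plan is to reduce both sides of the claimed equivalence to the same three scalar conditions on the bilinear form $q$, by first computing the norm polynomial $R\Cj{R}$ explicitly and then expressing membership of an arbitrary point of the line in $\N$ in terms of $q$.

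First I would expand the norm polynomial. Since $t$ commutes with all coefficients, $R\Cj{R} = (r_1 t + r_0)(\Cj{r_1} t + \Cj{r_0}) = r_1\Cj{r_1}\, t^2 + (r_1\Cj{r_0} + r_0\Cj{r_1})\, t + r_0\Cj{r_0}$. Reading off the definition of $q$, the leading and constant coefficients are exactly $q(r_1,r_1)$ and $q(r_0,r_0)$, while the middle coefficient equals $2q(r_0,r_1)$. As noted in the preliminaries, $R\Cj{R}\in\R[t]$ is a genuine real polynomial, so $R\Cj{R} = 0$ holds if and only if each of these three coefficients vanishes, i.e. if and only if $q(r_0,r_0) = q(r_0,r_1) = q(r_1,r_1) = 0$.

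Next I would treat the geometric side. Because $r_0$ and $r_1$ are linearly independent, $[r_0]\vee[r_1]$ is a genuine line, and every one of its points can be written as $[\lambda r_0 + \mu r_1]$ for some $(\lambda,\mu)\in\R^2\setminus\{0\}$. Such a point lies on $\N$ precisely when $q(\lambda r_0 + \mu r_1,\, \lambda r_0 + \mu r_1) = 0$, which by bilinearity and symmetry of $q$ expands to $\lambda^2 q(r_0,r_0) + 2\lambda\mu\, q(r_0,r_1) + \mu^2 q(r_1,r_1) = 0$. The line is a null line exactly when this holds for \emph{all} $(\lambda,\mu)$, and a homogeneous quadratic form in two real variables vanishes identically if and only if all three of its coefficients are zero. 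This yields the very same conditions $q(r_0,r_0) = q(r_0,r_1) = q(r_1,r_1) = 0$, and comparing with the previous paragraph completes the equivalence.

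I do not expect a serious obstacle here: the argument is essentially a pair of short computations joined by the elementary fact that a quadratic form vanishes identically iff its coefficients vanish. The only points demanding care are the bookkeeping identification of the coefficients of $R\Cj{R}$ with the values $q(r_i,r_j)$, and the use of linear independence, which is what guarantees both that $[r_0]\vee[r_1]$ is a line and that the parametrization $[\lambda r_0 + \mu r_1]$ actually sweeps out all of its points, so that ``null line'' corresponds precisely to the identical vanishing of the quadratic form rather than to its vanishing at a few isolated parameter values.
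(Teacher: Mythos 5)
Your proposal is correct and takes essentially the same route as the paper: both proofs expand $R\Cj{R} = q(r_1,r_1)t^2 + 2q(r_0,r_1)t + q(r_0,r_0)$ and reduce both sides of the equivalence to the three conditions $q(r_0,r_0) = q(r_0,r_1) = q(r_1,r_1) = 0$. The only difference is that where the paper simply cites \cite[Lemma~6.3.3]{casas-alvero14} for the fact that the line $[r_0]\vee[r_1]$ lies in $\N$ if and only if these conditions hold, you prove that fact directly by expanding the quadratic form $\lambda^2 q(r_0,r_0) + 2\lambda\mu\,q(r_0,r_1) + \mu^2 q(r_1,r_1)$ along the parametrization $[\lambda r_0 + \mu r_1]$, which makes the argument self-contained at no real cost.
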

\begin{proof}
  The line $[r_0] \vee [r_1]$ is contained in $\N$ if and only if $q(r_0,r_0) =
  q(r_0,r_1) = q(r_1,r_1) = 0$ \cite[Lemma~6.3.3]{casas-alvero14}. On the other
  hand we have
  \begin{align*}
    R \Cj{R} = r_1 \Cj{r}_1 t^2 + (r_0 \Cj{r}_1 + r_1 \Cj{r}_0) t + r_0 \Cj{r}_0 =
    q(r_1,r_1) t^2 + 2 q(r_0, r_1) t + q(r_0, r_0)
  \end{align*}
  and the two statements are equivalent.
\end{proof}

The two families of null lines can be distinguished by algebraic properties of
split quaternions.

\begin{thm}
  \label{th:left-right-ruling}
  If $[h]$ is a point of $\N$, the sets
  \begin{equation}
    \label{eq:left-right-ruling}
    \mathcal{L} \coloneqq \{ [r] \mid r\Cj{h} = 0 \}
    \quad\text{and}\quad
    \mathcal{R} \coloneqq \{ [r] \mid \Cj{h}r = 0 \}
  \end{equation}
  are the two different rulings of $\N$ through~$[h]$.
\end{thm}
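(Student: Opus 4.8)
The plan is to exhibit $\mathcal{L}$ and $\mathcal{R}$ as two distinct null lines through $[h]$ and then to appeal to the fact that a hyperbolic quadric in $\P^3$ contains exactly two lines through each of its points, one from each ruling; this forces $\mathcal{L}$ and $\mathcal{R}$ to be the two rulings of $\N$ through $[h]$. To begin, the hypothesis $h\Cj{h} = \Cj{h}h = 0$ shows $[h] \in \mathcal{L}$ and $[h] \in \mathcal{R}$. I would then check that both sets are genuine lines. Writing $\rho\colon r \mapsto r\Cj{h}$ and $\lambda\colon r \mapsto \Cj{h}r$ for the $\R$-linear maps given by right and left multiplication by $\Cj{h}$, we have $\mathcal{L} = \P(\ker\rho)$ and $\mathcal{R} = \P(\ker\lambda)$. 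Since $h$ has vanishing norm, so does $\Cj{h}$, and under the isomorphism $\SPQ \cong M_2(\R)$ (with the norm corresponding to the determinant) the nonzero element $\Cj{h}$ becomes a rank-one matrix. Right and left multiplication by a rank-one matrix are rank-two endomorphisms of the four-dimensional space $\SPQ$, so $\ker\rho$ and $\ker\lambda$ are two-dimensional and $\mathcal{L}$, $\mathcal{R}$ are lines in $\P(\SPQ)$.

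Next I would verify $\mathcal{L} \subseteq \N$ intrinsically, the argument for $\mathcal{R}$ being symmetric. Choose $r$ with $\ker\rho = \langle h, r\rangle$; by bilinearity it suffices to show $q$ vanishes on the pairs $(h,h)$, $(h,r)$ and $(r,r)$. We have $q(h,h) = h\Cj{h} = 0$. For the mixed term, $r\Cj{h} = 0$ gives, after conjugation, $h\Cj{r} = \Cj{(r\Cj{h})} = 0$, so $q(h,r) = \tfrac{1}{2}(h\Cj{r} + r\Cj{h}) = 0$. For $q(r,r) = r\Cj{r}$, note that $r$ cannot be invertible, since $r^{-1}$ would yield $\Cj{h} = r^{-1}(r\Cj{h}) = 0$; hence $r\Cj{r} = 0$. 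Thus $q$ vanishes on $\ker\rho$, and applying Lemma~\ref{lem:null-lines} to $R = rt + h$ shows that $\mathcal{L}$ is a null line.

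It remains to see that $\mathcal{L}$ and $\mathcal{R}$ are distinct. A point of $\mathcal{L} \cap \mathcal{R}$ is represented by some $r$ with $r\Cj{h} = \Cj{h}r = 0$; in the matrix model these conditions say that $r$ shares both the row space and the column space of the rank-one matrix $h$, which forces $r \in \langle h\rangle$. Hence $\mathcal{L} \cap \mathcal{R} = \{[h]\}$, so the two lines are different and therefore belong to different rulings. The main obstacle is the bookkeeping of the previous two paragraphs: the containment $\mathcal{L}, \mathcal{R} \subseteq \N$ is a short intrinsic computation, but pinning down that the kernels have dimension exactly two—so that one really obtains full lines rather than a single point—is where the algebra structure $\SPQ \cong M_2(\R)$ and the rank-one nature of $\Cj{h}$ are essential. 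Once two distinct null lines through $[h]$ are in hand, the identification with the two rulings is immediate from the projective geometry of the hyperbolic quadric and needs no further split-quaternion computation.
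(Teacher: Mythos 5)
Your proof is correct, and although it follows the same skeleton as the paper's proof (exhibit $\mathcal{L}$ and $\mathcal{R}$ as two distinct null lines through $[h]$, then invoke the fact that a hyperbolic quadric in $\P(\SPQ)$ carries exactly two rulings through each of its points), every individual step is carried out differently. Where the paper simply \emph{asserts} that the homogeneous systems $r\Cj{h}=0$ and $\Cj{h}r=0$ have two-dimensional solution spaces, you prove it via the isomorphism $\SPQ \cong \mathrm{M}_2(\R)$, under which $\Cj{h}$ becomes a rank-one matrix and left or right multiplication by it a rank-two endomorphism of $\SPQ$; the same model yields your cleanest improvement, the explicit computation $\mathcal{L} \cap \mathcal{R} = \{[h]\}$, which sharpens the paper's terse remark that the two equation systems ``are not equivalent''. (One detail worth making explicit there: conjugation corresponds to the adjugate in the matrix model, so the conditions $r\Cj{h}=\Cj{h}r=0$ translate into row- and column-space containments relative to $h$ via $\mathrm{range}(\mathrm{adj})=\ker$ for rank-one matrices; the conclusion $r \in \langle h\rangle$ is correct.) For the containment $\mathcal{L} \subseteq \N$, the paper shows $\mathcal{L}$ lies in the tangent plane at $[h]$ and produces a second point $[ph] \in \mathcal{L} \cap \N$ by a Clifford-translation argument; you instead check directly that $q$ vanishes on a spanning pair $h$, $r$ of $\ker\rho$, with the neat observation that $r$ cannot be invertible (else $\Cj{h} = r^{-1}(r\Cj{h}) = 0$), so $q(r,r)=r\Cj{r}=0$. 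What each approach buys: yours is more self-contained and rigorous on the dimension count and on distinctness, at the price of importing the matrix model; the paper's intrinsic argument is thinner on those two points, but its Clifford-translation construction is exactly what is recycled in Corollary~\ref{cor:left-right-ruling2}, so the paper obtains that corollary essentially for free.
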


\begin{proof}
  We observe that the system of homogeneous linear equations in the coefficients
  of $x \in \SPQ$ resulting from $xg = 0$ (or $gx = 0$) with $g \in \SPQ
  \setminus \{0\}$ has non trivial solutions if and only if $[g] \in \N$. In
  this case, the vector-space of solutions is of dimension two. This already
  implies that $\mathcal{L}$ and $\mathcal{R}$ are straight lines.

  Consider $[r] \in \mathcal{L}$. We have $2 \, q(h,r) = h\Cj{r} + r\Cj{h} =
  \Cj{(r\Cj{h})} + r\Cj{h} = 0$ and $\mathcal{L}$ lies in the tangent plane of
  $\N$ in $[h]$. We choose $p \in \SPQ$ such that $ph \neq 0$ and $[ph] \neq
  [h]$. This is possible since the solutions set of $xh = 0$ is a vector-space of
  dimension two and the set of all real multiples of $h$ is a vector-space of
  dimension one. From $(ph)\Cj{(ph)} = p(h\Cj{h})\Cj{p} = 0$ we infer that the
  point $[ph]$ lies on $\N$. Moreover, it is contained in $\mathcal{L}$ because
  of $(ph)\Cj{h} = p(h\Cj{h}) = 0$. Obviously, $h$ is contained in $\mathcal{L}$
  as well. Summing up, $\mathcal{L}$ is in the tangent plane of $\N$ in $[h]$
  and contains two distinct points $[h]$, $[ph] \in \N$. Hence, it is a null
  line through~$h$.

  Similar arguments demonstrate that $\mathcal{R}$ is a null line through $[h]$
  as well. The two rulings $\mathcal{L}$ and $\mathcal{R}$ cannot be the same as
  the equation systems $hx = 0$ and $xh = 0$ are not equivalent.
\end{proof}

\begin{defn}
  \label{def:left-right-ruling}
  The ruling $\mathcal{L}$ in Theorem~\ref{th:left-right-ruling} is called a
  \emph{left ruling} of $\N$, the ruling $\mathcal{R}$ is called a \emph{right
    ruling.}
\end{defn}

\begin{cor}
  \label{cor:left-right-ruling2}
  Consider two points $[h] \in \N$ and $[p] \in \P(\SPQ)$.
  \begin{itemize}
  \item If $p$ is such that $ph \neq 0$ and $[ph] \neq [h]$, then $[h] \vee
    [ph]$ is a left ruling of~$\N$.
  \item If $p$ is such that $hp \neq 0$ and $[hp] \neq [h]$, then $[h] \vee
    [hp]$ is a right ruling of~$\N$.
  \end{itemize}
\end{cor}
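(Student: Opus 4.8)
The plan is to derive this corollary almost directly from Theorem~\ref{th:left-right-ruling}, whose proof has already carried out the essential computation. The idea is to verify that both $[h]$ and $[ph]$ lie in the left ruling $\mathcal{L} = \{[r] \mid r\Cj{h} = 0\}$, and then to invoke uniqueness of the line through two distinct points.

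First I would treat the left-ruling assertion. Since $[h] \in \N$ we have $h\Cj{h} = 0$. This gives $[h] \in \mathcal{L}$ directly, and it also yields $(ph)\Cj{h} = p(h\Cj{h}) = 0$, so that $[ph] \in \mathcal{L}$ as well (the hypothesis $ph \neq 0$ ensures that $[ph]$ is a well-defined projective point). By Theorem~\ref{th:left-right-ruling} the set $\mathcal{L}$ is a line, namely the left ruling of $\N$ through $[h]$. The points $[h]$ and $[ph]$ are distinct by hypothesis, and a line is determined by any two of its distinct points, hence $[h] \vee [ph] = \mathcal{L}$ is the left ruling.

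The right-ruling assertion follows by the symmetric argument applied to $\mathcal{R} = \{[r] \mid \Cj{h}r = 0\}$. Using $\Cj{h}h = h\Cj{h} = 0$ one checks $[h] \in \mathcal{R}$ as well as $\Cj{h}(hp) = (\Cj{h}h)p = 0$, whence $[hp] \in \mathcal{R}$; distinctness of $[h]$ and $[hp]$ then forces $[h] \vee [hp] = \mathcal{R}$, the right ruling.

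I do not anticipate any genuine obstacle, since the whole content is already contained in the theorem and the corollary merely repackages the specific points $[ph]$ and $[hp]$ that were constructed during its proof. The only items requiring a word of care are that $[ph]$ and $[hp]$ are genuine projective points, which is guaranteed by the nonvanishing hypotheses, and that each span is actually a line, which is guaranteed by the distinctness hypotheses.
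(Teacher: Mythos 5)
Your proof is correct and follows essentially the same route as the paper's own: both verify that $[h]$ and $[ph]$ (resp.\ $[hp]$) satisfy the defining equation of $\mathcal{L}$ (resp.\ $\mathcal{R}$) from Theorem~\ref{th:left-right-ruling}, then conclude by the distinctness hypothesis that the span of the two points is the ruling itself. Your write-up is if anything slightly more explicit in spelling out the membership checks $h\Cj{h}=0$, $(ph)\Cj{h}=p(h\Cj{h})=0$ and their right-sided analogues, which the paper compresses into a single sentence.
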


\begin{proof}
  Consider $p \in \SPQ$ with $ph \neq 0$ and $[ph] \neq [h]$. As argued in the
  proof of Theorem~\ref{th:left-right-ruling}, such a choice is possible. We
  have $h\Cj{h} = 0$ and $ph\Cj{h} = 0$. This implies that $[h]$ and $[ph]$ lie
  on the same left ruling by Theorem~\ref{th:left-right-ruling}. Therefore we
  have two different points on a left ruling and the span $[h] \vee [ph]$ of
  these points is the ruling itself. The second statement is similar.
\end{proof}

\begin{rmk}
  For fixed $p \in \SPQ \setminus \{0\}$ the maps $[x] \mapsto [px]$ and $[x]
  \mapsto [xp]$ are the well-known Clifford left and right translations of
  non-Euclidean geometry.
\end{rmk}

\begin{cor}
  \label{cor:affine-two-plane-of-zeros}
  Given split quaternions $h$, $g \in \SPQ \setminus \{0\}$ such that $[h] \vee
  [g]$ is a left (right) ruling of $\N$, there exists an affine two-plane of
  split quaternions $p$ such that $g = ph$ ($g = hp$).
\end{cor}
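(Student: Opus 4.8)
The plan is to reduce the existence of such $p$ to a statement about the linear map $\phi\colon \SPQ \to \SPQ$, $p \mapsto ph$, and then to identify its image with a space of zero divisors that is already understood from Theorem~\ref{th:left-right-ruling}. I would treat the left-ruling case explicitly; the right-ruling case then follows by the symmetric argument applied to $p \mapsto hp$.

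First I would record what the hypothesis gives. Since $[h] \vee [g]$ is a null line, both $[h]$ and $[g]$ lie on $\N$, so in particular $h\Cj{h} = 0$. Because through a point of the hyperbolic quadric $\N$ there passes exactly one null line of each family, the left ruling $[h] \vee [g]$ must coincide with the left ruling $\mathcal{L} = \{[r] \mid r\Cj{h} = 0\}$ through $[h]$ furnished by Theorem~\ref{th:left-right-ruling}. Consequently $[g] \in \mathcal{L}$, that is, $g\Cj{h} = 0$.

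Next I would analyze $\phi$. The proof of Theorem~\ref{th:left-right-ruling} shows that for $[h] \in \N$ the solution space of $xh = 0$ is two-dimensional; this space is precisely $\ker\phi$, so by rank-nullity the image $\SPQ h = \phi(\SPQ)$ is a two-dimensional subspace of $\SPQ$. On the other hand, every element $ph$ of this image satisfies $(ph)\Cj{h} = p(h\Cj{h}) = 0$, whence $\SPQ h \subseteq \{r \mid r\Cj{h} = 0\}$; since the latter is also two-dimensional (it is the vector space underlying the line $\mathcal{L}$), the inclusion is forced to be an equality. Now $g\Cj{h} = 0$ places $g$ in this common space, so $g \in \SPQ h$ and the equation $ph = g$ admits at least one solution $p_0$.

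Finally, the full solution set of $ph = g$ is the coset $p_0 + \ker\phi$, and since $\ker\phi$ is two-dimensional while $p_0 \neq 0$ (because $g \neq 0$), this is an affine two-plane, as claimed. I expect the only genuinely delicate point to be the dimension bookkeeping that upgrades the obvious inclusion $\SPQ h \subseteq \{r \mid r\Cj{h} = 0\}$ to equality: the inclusion itself is a one-line computation, but concluding equality hinges on both spaces having dimension exactly two, which I would read off carefully from the argument in Theorem~\ref{th:left-right-ruling} rather than recompute by hand.
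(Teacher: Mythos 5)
Your proof is correct and takes essentially the same route as the paper: both reduce the claim to the linear system $xh = g$ and invoke the two-dimensionality of the homogeneous solution space of $xh = 0$ established in the proof of Theorem~\ref{th:left-right-ruling}. In fact the paper's proof is terser than yours --- it leaves the consistency of the inhomogeneous system implicit, whereas you make it explicit via the identification of the image $\SPQ h$ with $\{r \mid r\Cj{h} = 0\}$ together with the observation $g\Cj{h} = 0$, so your version is, if anything, more complete.
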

\begin{proof}
  The split quaternion equation $g = xh$ results in a system of in-homogeneous
  linear equations for the coefficients of $x$. We already argued in our proof
  of Theorem~\ref{th:left-right-ruling} that the solution space of the
  corresponding system of homogeneous equations is of dimension two.
\end{proof}

Corollary~\ref{cor:affine-two-plane-of-zeros} is a pure existence result. The
next theorem provides a parametrization of the affine two-plane in
Corollary~\ref{cor:affine-two-plane-of-zeros}. The main idea of the proof is to
derive properties of a split quaternion $p = p_0 + p_1 \qi + p_2 \qj + p_3 \qk$
by finding relations between its ``positive'' part $p_0 + p_1 \qi$ and its
``negative'' part $p_2 \qj + p_3 \qk$. These terms are motivated by the sign of
their respective norms.

\begin{thm}
  \label{th:affine-two-plane-of-zeros}
  Suppose that $h = h_0 + h_1 \qi + h_2 \qj + h_3 \qk \in \SPQ \setminus \{ 0
  \}$ and $g = g_0 + g_1 \qi + g_2 \qj + g_3 \qk \in \SPQ \setminus \{ 0 \}$ are
  as in Corollary~\ref{cor:affine-two-plane-of-zeros}. The affine two-plane
  consisting of all split quaternions $x \in \SPQ$ solving the equation $g = xh$
  can be parameterized by $u + \lambda \Cj{h} + \mu \qi \Cj{h}$, where $u = (g_0
  + g_1 \qi) (h_0 + h_1 \qi)^{-1}$ and $\lambda$, $\mu \in \R$. (The same
  statement holds for $g = hx$ with $u + \lambda \Cj{h} + \mu \Cj{h} \qi$ where
  $u = (h_0 + h_1 \qi)^{-1} (g_0 + g_1 \qi)$ and $\lambda$, $\mu \in \R$.)
\end{thm}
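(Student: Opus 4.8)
My plan is to write the affine solution set of $g = xh$ as $u + V$, where $u$ is any single solution and $V$ is the two-dimensional real vector space of solutions of the homogeneous equation $xh = 0$; that $\dim V = 2$ is exactly the content of Corollary~\ref{cor:affine-two-plane-of-zeros}. It then remains to (a) produce an explicit basis of $V$ and (b) verify that the proposed $u$ really satisfies $uh = g$.

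For (a) I would observe that, since $[h] \in \N$, the norm $h\Cj h = \Cj h h$ vanishes; hence $\Cj h$ solves $xh = 0$, and so does $\qi\Cj h$ because $(\qi\Cj h)h = \qi(\Cj h h) = 0$. These two solutions are linearly independent: a real relation $\alpha\Cj h + \beta\qi\Cj h = (\alpha + \beta\qi)\Cj h = 0$ with $(\alpha,\beta)\neq(0,0)$ is impossible, since $\alpha + \beta\qi$ has nonzero norm $\alpha^2 + \beta^2$, hence is invertible, forcing $\Cj h = 0$ and thus $h = 0$. Therefore $\{\Cj h, \qi\Cj h\}$ is a basis of $V$, which already accounts for the $\lambda\Cj h + \mu\qi\Cj h$ part of the parametrization.

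Step (b) is the crux, and the tool is the decomposition $\SPQ = A \oplus B$ into the \emph{positive} part $A = \{a_0 + a_1\qi\}$ and the \emph{negative} part $B = \{a_2\qj + a_3\qk\}$; write $h = h^+ + h^-$ and $g = g^+ + g^-$ accordingly. First, $h^+ = h_0 + h_1\qi$ is invertible: were $h^+ = 0$, the null condition $h\Cj h = h_0^2 + h_1^2 - h_2^2 - h_3^2 = 0$ would give $h^- = 0$ as well, contradicting $h \neq 0$. The multiplication relations give the grading $A\cdot A, B\cdot B \subseteq A$ and $A\cdot B, B\cdot A\subseteq B$, and — because $\qi$ anticommutes with $\qj$ and $\qk$ — the commutation rule $ab = b\Cj a$ for $a\in A$, $b\in B$. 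Because $[h]\vee[g]$ is a left ruling, Theorem~\ref{th:left-right-ruling} gives $g\Cj h = 0$ (equivalently $g\Cj h = ph\Cj h = 0$); splitting this identity into its $A$- and $B$-parts and keeping the $B$-part yields $g^+h^- = g^-\Cj{h^+}$. Now for $u = g^+(h^+)^{-1} \in A$ one finds $uh = g^+ + g^+(h^+)^{-1}h^-$, and the commutation rule rewrites $g^+(h^+)^{-1}h^- = g^+h^-(\Cj{h^+})^{-1} = g^-$, so $uh = g^+ + g^- = g$, as desired.

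The step I expect to be the main obstacle is (b): the particular solution $u = (g_0 + g_1\qi)(h_0 + h_1\qi)^{-1}$ looks unmotivated until one sees that it is dictated by the interplay between the $A$/$B$-grading and the constraint $g\Cj h = 0$, and the verification only collapses to a one-line computation once the commutation rule $ab = b\Cj a$ has been isolated. The parenthetical claim for $g = hx$ follows by the mirror-image argument: $\Cj h$ and $\Cj h\qi$ span the solutions of $hx = 0$, the relevant constraint is $\Cj h g = 0$ coming from the right ruling via Theorem~\ref{th:left-right-ruling}, and $u = (h^+)^{-1}g^+$ is checked to satisfy $hu = g$ using $ba = \Cj a\, b$ for $a\in A$, $b\in B$.
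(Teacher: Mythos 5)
Your proof is correct and follows essentially the same route as the paper: the same particular solution $u = (g_0 + g_1 \qi)(h_0 + h_1 \qi)^{-1}$, the same homogeneous solutions $\Cj{h}$ and $\qi\Cj{h}$ (with dimension two quoted from Corollary~\ref{cor:affine-two-plane-of-zeros}), and the same positive/negative decomposition driven by the left-ruling identity $g\Cj{h} = 0$. The only, cosmetic, difference is in the final verification of $uh = g$: you keep the negative part of $g\Cj{h} = 0$ and close with the commutation rule $ab = b\Cj{a}$, whereas the paper keeps the positive part and closes by substituting the norm identity $(h_0 + h_1\qi)\Cj{(h_0 + h_1\qi)} = -(h_2\qj + h_3\qk)\Cj{(h_2\qj + h_3\qk)}$ coming from $h\Cj{h} = 0$.
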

\begin{proof}
  Regarding the system of linear equations arising from $xh = g$ we have to show
  the following:
  \begin{itemize}
  \item $u$ solves $xh = g$,
  \item $\Cj{h}$ and $\qi\Cj{h}$ solve the corresponding homogeneous system $xh
    = 0$, and
  \item $\Cj{h}$ and $\qi\Cj{h}$ are linearly independent.
  \end{itemize}
  (Note that we already know that the solution space is of dimension two.)

  The condition that $[h] \vee [g]$ is a null line yields
  \begin{equation}
    \label{eq:affine-two-plane-of-zeros2}
    h \Cj{h} = g \Cj{g} = h \Cj{g} + g \Cj{h} = 0.
  \end{equation}
  Moreover, it is a left ruling of $\N$ and therefore
  \begin{equation}
    \label{eq:affine-two-plane-of-zeros3}
    h \Cj{g} = g \Cj{h} = 0
  \end{equation}
  by Equation~\eqref{eq:left-right-ruling}. From
  Equation~\eqref{eq:affine-two-plane-of-zeros2} we obtain
  \begin{equation}
    \label{eq:affine-two-plane-of-zeros4}
    0 = h \Cj{h} = (h_0 + h_1 \qi) \Cj{(h_0 + h_1 \qi)} + (h_2 \qj + h_3 \qk) \Cj{(h_2 \qj + h_3 \qk)}
  \end{equation}
  and, because $h \neq 0$, the norms of $h_0 + h_1 \qi$ and $h_2 \qj + h_3 \qk$
  are different from zero. Hence, $h_0 + h_1 \qi$ and $h_2 \qj + h_3 \qk$ are
  both invertible and $u = (g_0 + g_1 \qi) (h_0 + h_1 \qi)^{-1}$ is well
  defined. We have
  \begin{equation}
  \begin{aligned}
    \label{eq:affine-two-plane-of-zeros5}
    u h &= (g_0 + g_1 \qi) (h_0 + h_1 \qi)^{-1} (h_0 + h_1 \qi + h_2 \qj + h_3 \qk) \\
    &= g_0 + g_1 \qi + (g_0 + g_1 \qi) (h_0 + h_1 \qi)^{-1} (h_2 \qj + h_3 \qk) \\
    &= g_0 + g_1 \qi + \frac{1}{(h_0 + h_1 \qi) \Cj{(h_0 + h_1 \qi)}} (g_0 + g_1 \qi) \Cj{(h_0 + h_1 \qi)} (h_2 \qj + h_3 \qk).
  \end{aligned}
  \end{equation}
  Equation~\eqref{eq:affine-two-plane-of-zeros3} yields
  \begin{align*}
    0 = g \Cj{h} = \ &(g_0 + g_1 \qi) \Cj{(h_0 + h_1 \qi)} + (g_2 \qj + g_3 \qk) \Cj{(h_2 \qj + h_3 \qk)} + \\ &(g_0 + g_1 \qi) \Cj{(h_2 \qj + h_3 \qk)} + (g_2 \qj + g_3 \qk) \Cj{(h_0 + h_1 \qi)}
  \end{align*}
  where the first two terms form the positive part (they are in the span of $1$
  and $\qi$) while the two trailing terms form the negative part (they are in
  the span of $\qj$ and $\qk$). Positive and negative parts both have to vanish whence
  \begin{equation}
  \begin{aligned}
    \label{eq:affine-two-plane-of-zeros6}
    0 = \ &(g_0 + g_1 \qi) \Cj{(h_0 + h_1 \qi)} + (g_2 \qj + g_3 \qk) \Cj{(h_2 \qj + h_3 \qk)} \\
    \Leftrightarrow \, &(g_0 + g_1 \qi) \Cj{(h_0 + h_1 \qi)} (h_2 \qj + h_3 \qk) = -(g_2 \qj + g_3 \qk) \Cj{(h_2 \qj + h_3 \qk)} (h_2 \qj + h_3 \qk)
  \end{aligned}
  \end{equation}
  Substituting Equation~\eqref{eq:affine-two-plane-of-zeros6} into
  \eqref{eq:affine-two-plane-of-zeros5} we obtain via
  \eqref{eq:affine-two-plane-of-zeros4}
  \begin{equation*}
    u h = g_0 + g_1 \qi - \frac{(h_2 \qj + h_3 \qk) \Cj{(h_2 \qj + h_3 \qk)}} {(h_0 + h_1 \qi) \Cj{(h_0 + h_1 \qi)}} (g_2 \qj + g_3 \qk) = g_0 + g_1 \qi + g_2 \qj + g_3 \qk = g
  \end{equation*}
  and $u$, indeed, solves $xh = g$.
  
  The split quaternions $\Cj{h} = h_0 - h_1\qi - h_2\qj - h_3\qk$ and $\qi\Cj{h}
  = h_1 + h_0\qi + h_3\qj - h_2\qk$ obviously solve the homogeneous system $xh =
  0$. It remains to be shown that they are linearly independent: The positive
  parts are linearly dependent if and only if $h_0 = h_1 = 0$, the negative
  parts are linearly dependent if and only if $h_2 = h_3 = 0$. Both conditions
  cannot be fulfilled as $h \neq 0$.
\end{proof}

Note that the statements of Corollary~\ref{cor:affine-two-plane-of-zeros} and
Theorem~\ref{th:affine-two-plane-of-zeros} hold true even if $[g]$ and $[h]$ do
not span a line but coincide.

\section{Factorization Results}
\label{sec:factorization-results}

In this section, we investigate factorizability of quadratic split quaternion
polynomials. Consider a quadratic polynomial
\begin{equation}
  \label{eq:polynomial}
  P = a t^2 + b t + c \in \SPQ[t],
\end{equation}
where $a = a_0 + a_1 \qi + a_2 \qj + a_3 \qk$, $b = b_0 + b_1 \qi + b_2 \qj +
b_3 \qk$, $c = c_0 + c_1 \qi + c_2 \qj + c_3 \qk$ are split quaternions. We say
that $P$ \emph{admits a factorization,} if there exist split quaternions $h_1$,
$h_2$ such that
\begin{equation*}
  P = a(t - h_1)(t - h_2).
\end{equation*}
For the time being (until Section~\ref{sec:factor-non-invertible}) we assume
that the leading coefficient $a$ is invertible. In this case, we may further
assume that $P$ is monic because we may easily construct all factorization of
$P$ from factorizations of the monic polynomial $a^{-1}P$. Finally, we apply the
parameter transformation $t \mapsto t - \frac{b_0}{2}$ whence $b_0 = 0$. To
summarize, we investigate the factorizations
\begin{equation*}
  P = t^2 + bt + c = (t-h_1)(t-h_2)
\end{equation*}
where $b$, $c$, $h_1$, $h_2 \in \SPQ$ and $\RE b = 0$ (or, equivalently, $b +
\Cj{b} = 0$).

A fundamental result (for example \cite[Theorem~2]{li18d}) relates
factorizations to right zeros:
\begin{lem}
  \label{lem:zero-factor}
  The split quaternion $h_2$ is a right zero of the (not necessarily monic)
  polynomial $P \in \SPQ[t]$ if and only if $t - h_2$ is a right factor of~$P$.
\end{lem}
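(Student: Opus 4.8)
The plan is to treat this as the skew-polynomial analogue of the factor theorem, the crux being right division with remainder. Since the linear polynomial $t - h_2$ is monic, its leading coefficient $1$ is a unit, and right division by it is always possible: for any $P \in \SPQ[t]$ there exist a quotient $Q \in \SPQ[t]$ and a constant remainder $r \in \SPQ$ with $P = Q(t - h_2) + r$. One obtains $Q$ and $r$ by the usual iterative reduction of the leading term, which is legitimate here because $t$ is central and the leading coefficient of $t - h_2$ is invertible. Once this is in place, ``$t - h_2$ is a right factor of $P$'' is equivalent to ``$r = 0$'', so it suffices to identify the remainder $r$ with the right evaluation $P(h_2)$.

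The central computation, which simultaneously settles the direction ``$t-h_2$ is a right factor $\Rightarrow$ $h_2$ is a right zero'', is that every polynomial of the form $Q(t - h_2)$ has $h_2$ as a right zero. Writing $Q = \sum_{\ell} q_\ell t^\ell$ and using that $t$ commutes with the coefficient $h_2$, I would expand
\[
  Q(t - h_2) = \sum_\ell q_\ell\, t^{\ell+1} - \sum_\ell q_\ell h_2\, t^\ell .
\]
Right evaluation at $h_2$ keeps the coefficients on the left and replaces $t^m$ by $h_2^m$, turning the two sums into $\sum_\ell q_\ell h_2^{\ell+1}$ and $\sum_\ell q_\ell h_2\, h_2^{\ell} = \sum_\ell q_\ell h_2^{\ell+1}$, which cancel. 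Hence $(Q(t-h_2))(h_2) = 0$ for every $Q$.

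For the converse I would invoke additivity of right evaluation in the coefficients: applying it to $P = Q(t - h_2) + r$ gives $P(h_2) = (Q(t - h_2))(h_2) + r = r$ by the previous paragraph. Consequently $P(h_2) = 0$ forces $r = 0$ and thus $P = Q(t - h_2)$, making $t - h_2$ a right factor; conversely a factorization $P = Q(t-h_2)$ forces $r = 0$ and hence $P(h_2) = 0$. This closes the equivalence.

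The point to watch, and the only real obstacle, is that right evaluation is \emph{not} a ring homomorphism over the noncommutative ring $\SPQ$, so one cannot argue by the naive substitution $P(h_2) = Q(h_2)\,(h_2 - h_2) = 0$. The entire weight of the proof rests on the explicit cancellation displayed above, which in turn hinges on the centrality of $t$ (so that $t^\ell h_2 = h_2 t^\ell$); without that hypothesis the two telescoping sums would fail to coincide and the remainder would no longer equal the right evaluation.
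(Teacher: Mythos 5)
Your proof is correct, and it is essentially the argument behind the paper's version of this statement: the paper itself gives no proof but quotes the lemma as a known result from \cite{li18d}, where it is established exactly by your route — right division with remainder by the monic linear polynomial $t-h_2$ (valid because $t$ is central and the divisor is monic), followed by the telescoping cancellation showing $(Q(t-h_2))(h_2)=0$, so that the constant remainder equals the right evaluation $P(h_2)$. Your closing caveat that right evaluation is not a ring homomorphism, so the naive substitution argument is inadmissible, is precisely the subtlety that makes this lemma nontrivial in $\SPQ[t]$.
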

Once a right factor $t - h_2$ of a quadratic polynomial is found, a left factor
$t - h_1$ can be computed by left polynomial division. Thus finding
factorizations is essentially equivalent to finding right zeros and all results
on right zeros of \cite{abrate09,cao19} are of relevance to us. Nonetheless, we
continue by developing our own criteria that are related to a well-known
procedure \cite{hegedus13,li18d} for computing a factorization of a generic
quadratic polynomial~$P$:
\begin{itemize}
  \label{alg:factorization}
\item Pick a monic quadratic factor $M \in \R[t]$ of the norm polynomial
  $P\Cj{P}$.
\item Compute the remainder polynomial $R$ of $P$ when dividing by $M$. Since
  $P$ and $M$ are monic we have $P = M + R$. Moreover, $\deg R \le 1$.
\item If $R \Cj{R} \neq 0$, then $R$ has a unique zero $h_2 \in \SPQ$. The
  linear split quaternion polynomial $t - h_2$ is not only a factor of $R$, but
  also of $M$ and therefore a factor of~$P$.
\item Right division of $P$ by $t - h_2$ yields the factorization $P = (t - h_1)
  (t - h_2)$ with $h_1$, $h_2 \in \SPQ$.
\end{itemize}
We refer to above construction as \emph{generic factorization algorithm}. It is
sufficient unless $R \Cj{R} = 0$. In this case the remainder polynomial $R$
might not have a zero at all. If it has a zero, it already has infinitely many
zeros but it is not guaranteed that they lead to right factors. In this sense,
factorization of split quaternion polynomials is more interesting than
factorization of polynomials over the division ring of ordinary (Hamiltonian)
quaternions.

The goal of this section is to provide necessary and sufficient criteria for
factorizability of all monic quadratic split quaternion polynomials $P = t^2 +
bt + c$. In doing so, we consider the following sub-cases:
\begin{itemize}
\item $b$, $c \in \R$ $\leadsto$
  Corollary~\ref{cor:factor-polynomial-b-real-c-real},
  Lemma~\ref{lem:factor-polynomial-b-real-c-real}
\item $b \in \R$ and $c \in \SPQ$ $\leadsto$
  Theorem~\ref{th:factor-polynomial-b-real-c-nonreal}
\item $b$, $c \in \SPQ$ and $1$, $b$, $c$ linearly dependent $\leadsto$
  Theorem~\ref{th:factor-polynomial-b-nonreal-c-nonreal}
\item $b$, $c \in \SPQ$ and $1$, $b$, $c$ linearly independent $\leadsto$
  Theorem~\ref{th:factor-polynomial-monic-independent-coefficients}
\end{itemize}

Our structure differs from \cite{abrate09} but we can draw direct connections to
some results. Lemma~\ref{lem:factor-polynomial-b-real-c-real} is related to
\cite[Theorem~2.4.1]{abrate09}, \cite[Theorem~2.2]{abrate09} provides a formula
to compute the roots of a polynomial given that the linear coefficient is not
invertible. Although the condition for the formula to yield all roots is not
met, it can be used to obtain our result in
Theorem~\ref{th:factor-polynomial-b-real-c-nonreal}. The combination of
\cite[Theorem~2.2]{abrate09} and \cite[Theorem~2.4.2]{abrate09} covers the
second item in our Theorem~\ref{th:factor-polynomial-b-nonreal-c-nonreal}.

Similar to the structure in \cite{cao19} we begin our discussion with the case
that the linear coefficient of $P$ is real. The combination of our
Lemma~\ref{lem:factor-polynomial-b-real-c-real} and
Theorem~\ref{th:factor-polynomial-b-real-c-nonreal} is equivalent to
\cite[Theorem~2.1 and Theorem~2.2]{cao19}. Finally,
Theorem~\ref{th:factor-polynomial-b-nonreal-c-nonreal} and
Theorem~\ref{th:factor-polynomial-monic-independent-coefficients} cover the
statements in \cite[Theorem~4.1 and Theorem~4.2]{cao19}.

Regarding the remaining results in \cite{abrate09} or \cite{cao19} it is not so
straightforward to draw direct connections, one would find a set of polynomials
which need to be treated by different cases with respect to our characterization
but can be covered by only one theorem in \cite{abrate09} or \cite{cao19} and
vice versa.

A split quaternion $x = x_0 + x_1 \qi + x_2 \qj + x_3 \qk \in \SPQ$ is a zero of
$P = t^2 + bt + c$ with $\Scalar{b} = 0$ if and only if it solves the real
system of nonlinear equations
\begin{equation}
  \label{eq:equation-system}
  \begin{aligned}
    x_0^2 - x_1^2 + x_2^2 + x_3^2 - b_1 x_1 + b_2 x_2 + b_3 x_3 + c_0 &= 0, \\
    2 x_0 x_1 + b_1 x_0 + b_3 x_2 - b_2 x_3 + c_1 &= 0, \\
    2 x_0 x_2 + b_2 x_0 + b_3 x_1 - b_1 x_3 + c_2 &= 0, \\
    2 x_0 x_3 + b_3 x_0 - b_2 x_1 + b_1 x_2 + c_3 &= 0.
  \end{aligned}
\end{equation}
In view of Lemma~\ref{lem:zero-factor}, it gives rise to a right factor $t - x$
of~$P$. Above system is obtained by evaluating $P$ at $x$ and equating the
coefficients of the quaternion units $\qi$, $\qj$, $\qk$ and the real
coefficient with zero. Note that we are only interested in real solutions. A
priori it is not obvious that this system has a solution at all. Indeed, there
exist examples with zero as well as with infinitely many solutions. Below we
present necessary and sufficient conditions for solvability in all cases along
with some solutions.

\subsection{Factorization of Monic Polynomials with Dependent Coefficients}
\label{sec:factor-monic-dependent}

To begin with, we determine the zeros of the polynomial $P$ in
\eqref{eq:polynomial} supposing that $P$ is real. In addition to the general
assumptions $a = 1$ and $b_0 = 0$ this means $b_1 = b_2 = b_3 = c_1 = c_2 = c_3
= 0$. The factorization algorithm for generic polynomials (described on
Page~\pageref{alg:factorization}) fails in this setup. However, we can directly
solve the polynomial system \eqref{eq:equation-system}.

\begin{lem}
  \label{lem:factor-polynomial-b-real-c-real}
  The polynomial $P = t^2 + c_0 \in \SPQ[t]$, where $c_0 \in \R$, has infinitely
  many split quaternion zeros given by the set $\{ x \in \SPQ : x_0 = 0, x\Cj{x}
  = c_0 \}$. In addition, if $c_0 \le 0$, there are two real zeros $x = \pm
  \sqrt{- c_0}$ which coincide for $c_0 = 0$.
\end{lem}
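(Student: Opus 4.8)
The plan is to solve for the split quaternion zeros of $P = t^2 + c_0$ directly, exploiting the scalar/vector decomposition rather than wrestling with the full system \eqref{eq:equation-system} in coordinates. Writing $x = x_0 + v$ with $x_0 = \Scalar{x} \in \R$ and $v = \Vector{x}$ the vectorial part, I would first record that a vectorial split quaternion satisfies $\Cj{v} = -v$, hence $v^2 = -v\Cj{v} \in \R$. Expanding $x^2 = x_0^2 + 2x_0 v + v^2 = (x_0^2 - v\Cj{v}) + 2 x_0 v$, the equation $x^2 = -c_0$ splits into a scalar equation $x_0^2 - v\Cj{v} = -c_0$ and a vectorial equation $2 x_0 v = 0$. (These are exactly the real part and the imaginary part of \eqref{eq:equation-system} after setting $b = 0$ and $c = c_0$.)

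The vectorial equation $2 x_0 v = 0$ forces $x_0 = 0$ or $v = 0$, giving two cases. First, if $x_0 = 0$ then $x = v$ is vectorial, so $x\Cj{x} = v\Cj{v}$, and the scalar equation reads $v\Cj{v} = c_0$; thus every such zero lies in $\{x \in \SPQ : x_0 = 0,\ x\Cj{x} = c_0\}$ and, conversely, every element of this set satisfies both equations. Second, if $v = 0$ then $x = x_0$ is real and the scalar equation becomes $x_0^2 = -c_0$, which has the real solutions $x_0 = \pm\sqrt{-c_0}$ precisely when $c_0 \le 0$, coinciding at $x_0 = 0$ when $c_0 = 0$.

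It remains to confirm that the vectorial solution set is genuinely infinite for every value of $c_0$. In coordinates, the condition $x\Cj{x} = c_0$ together with $x_0 = 0$ reads $x_1^2 - x_2^2 - x_3^2 = c_0$, an affine quadric of indefinite signature in the three variables $x_1, x_2, x_3$; such a quadric always carries infinitely many real points (for instance, solving for $x_1$ once $x_2, x_3$ are chosen large enough that $c_0 + x_2^2 + x_3^2 \ge 0$). This nonemptiness-and-infinitude claim is the only point that requires a genuine argument, and it is mild since the form is indefinite regardless of the sign of $c_0$; everything else follows mechanically from the scalar/vector split. Together the two cases show that the zero set always contains the infinite vectorial family, with the two real zeros appearing exactly in the regime $c_0 \le 0$, which is precisely the asserted description.
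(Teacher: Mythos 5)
Your proof is correct and takes essentially the same route as the paper: the paper solves the system \eqref{eq:equation-system} (which is precisely the scalar/vector split you write down, as you note yourself) by the identical case distinction $x_0 \neq 0$ versus $x_0 = 0$, arriving at the same two families of zeros. The only substantive addition on your side is the explicit verification that the indefinite quadric $x_1^2 - x_2^2 - x_3^2 = c_0$ carries infinitely many real points for every sign of $c_0$, a fact the paper simply asserts with ``It is always infinite.''
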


\begin{proof}
  We solve the equation system \eqref{eq:equation-system} for $x = x_0 + x_1 \qi
  + x_2 \qj + x_3 \qk$ under the additional assumption that $b_1 = b_2 = b_3 =
  c_1 = c_2 = c_3 = 0$. If $x_0 \neq 0$, we have $x_1 = x_2 = x_3 = 0$ and
  $x_0^2 + c_0 = 0$. Hence, $x$ is real and a real solution exists if and only
  if $c_0 \le 0$. If so, there are two (possibly identical) solutions $x =
  \pm\sqrt{- c_0}$. If $x_0 = 0$, the system simplifies to the single equation
  $- x_1^2 + x_2^2 + x_3^2 + c_0 = 0$. Since $- x_1^2 + x_2^2 + x_3^2 = - x
  \Cj{x}$, provided that $x_0 = 0$, the set of solutions reads as $\{ x \in
  \SPQ\colon x_0 = 0, x\Cj{x} = c_0 \}$. It is always infinite.
\end{proof}

Combining Lemma~\ref{lem:factor-polynomial-b-real-c-real} with
Lemma~\ref{lem:zero-factor}, we can state

\begin{cor}
  \label{cor:factor-polynomial-b-real-c-real}
  The polynomial $P = t^2 + c_0 \in \SPQ[t]$ with $c_0 \in \R$ admits infinitely
  many factorizations over~$\SPQ$.
\end{cor}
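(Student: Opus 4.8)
The plan is to deduce the corollary directly from the two preceding lemmas, since the substantive work has already been carried out. First I would invoke Lemma~\ref{lem:factor-polynomial-b-real-c-real}, which exhibits the set $Z = \{\, x \in \SPQ : x_0 = 0,\ x\Cj{x} = c_0 \,\}$ as an infinite collection of right zeros of $P = t^2 + c_0$. By Lemma~\ref{lem:zero-factor}, every $h_2 \in Z$ yields a right factor $t - h_2$ of $P$. Since $P$ is monic and quadratic while $t - h_2$ is monic and linear, right polynomial division produces a unique monic linear quotient $t - h_1$, giving a genuine factorization $P = (t - h_1)(t - h_2)$.

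Next I would establish that distinct zeros give distinct factorizations. The right factor $t - h_2$ of an ordered product $(t - h_1)(t - h_2)$ is recovered from its constant term, so two factorizations arising from different elements $h_2, h_2' \in Z$ are themselves different. As $Z$ is infinite, this already produces infinitely many factorizations over $\SPQ$, which is the assertion.

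Should one want the factorizations in closed form (not strictly needed for the count), I would note that each $h_2 \in Z$ is vectorial, whence $\Cj{h_2} = -h_2$ and therefore $h_2\Cj{h_2} = -h_2^2 = c_0$. Comparing coefficients in $(t - h_1)(t - h_2) = t^2 - (h_1 + h_2)t + h_1 h_2$, using that $t$ is central, against $t^2 + c_0$ forces $h_1 = -h_2$ and $h_1 h_2 = -h_2^2 = c_0$, the latter holding automatically. Thus every vectorial $h_2$ with $h_2\Cj{h_2} = c_0$ gives $P = (t + h_2)(t - h_2)$.

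There is essentially no obstacle; the single point deserving a word of care is the injectivity claim that distinct right zeros parametrize distinct factorizations. This is immediate once one observes that the chosen zero is both determined by, and determines, the right factor of the product.
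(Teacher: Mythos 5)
Your proposal is correct and takes essentially the same route as the paper, which obtains the corollary precisely by combining Lemma~\ref{lem:factor-polynomial-b-real-c-real} (infinitely many right zeros) with Lemma~\ref{lem:zero-factor} (zeros correspond to right factors). The extra details you supply --- the left cofactor $t - h_1 = t + h_2$ obtained by division and the injectivity of the map from zeros to factorizations --- only make explicit what the paper leaves implicit.
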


The solution set $\{ x \in \SPQ\colon x_0 = 0, x\Cj{x} = c_0 \}$ defines a
hyperboloid of one sheet, a cone or a hyperboloid of two sheets for $c_0 < 0,
c_0 = 0$ or $c_0 > 0$, respectively, in the affine space $\Vector{\SPQ}$.

\begin{rmk}
  Similar results can be obtained for Hamiltonian quaternions \cite{huang02}.
  One noteworthy difference is non-negativity of the Hamiltonian norm $x_0^2 +
  x_1^2 + x_2^2 + x_3^2$, whence, $x_1^2 + x_2^2 + x_3^2 = c_0$ has no solution
  for $c_0 < 0$ and $P$ has just two real zeros $\pm\sqrt{-c_0}$. The zero set
  of $P \in \R[t]$ over the split quaternions is always infinite.
\end{rmk}

We continue by considering monic polynomials whose constant coefficient is not
real. Such a polynomial is given by $P$ in \eqref{eq:polynomial} with $a = 1$,
$b_1 = b_2 = b_3 = 0$ and $c \notin \R$.

\begin{thm}
  \label{th:factor-polynomial-b-real-c-nonreal}
  The polynomial $P = t^2 + c \in \SPQ[t]$, where $c \in \SPQ \setminus \R$,
  admits a factorization if and only if
  \begin{itemize}
  \item $\Vector{c} \ \Cj{\Vector{c}} > 0$ or
  \item $c \Cj{c} \geq 0$ and $c_0 < 0$.
  \end{itemize}
\end{thm}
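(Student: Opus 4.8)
The plan is to reduce the existence of a factorization to the solvability of the real system \eqref{eq:equation-system} and then translate that solvability into the two stated inequalities. By Lemma~\ref{lem:zero-factor}, $P$ admits a factorization if and only if it has a right zero, i.e. if and only if \eqref{eq:equation-system} has a real solution $x = x_0 + x_1\qi + x_2\qj + x_3\qk$. Since here $b_1 = b_2 = b_3 = 0$, the system collapses to
\begin{equation*}
  x_0^2 - x_1^2 + x_2^2 + x_3^2 + c_0 = 0, \quad 2x_0 x_1 + c_1 = 0, \quad 2x_0 x_2 + c_2 = 0, \quad 2x_0 x_3 + c_3 = 0 .
\end{equation*}

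First I would rule out $x_0 = 0$: the last three equations would then force $c_1 = c_2 = c_3 = 0$, contradicting $c \notin \R$. Hence any zero satisfies $x_0 \neq 0$, and the last three equations give $x_\ell = -c_\ell/(2x_0)$ for $\ell = 1,2,3$. Substituting into the first equation and clearing denominators with $s \coloneqq x_0^2 > 0$ yields a single quadratic
\begin{equation*}
  4s^2 + 4 c_0\, s - (c_1^2 - c_2^2 - c_3^2) = 0 .
\end{equation*}
Two identities make the result transparent: the constant term equals $-\Vector{c}\,\Cj{\Vector{c}}$ because $\Vector{c}\,\Cj{\Vector{c}} = c_1^2 - c_2^2 - c_3^2$, and the discriminant equals $16(c_0^2 + c_1^2 - c_2^2 - c_3^2) = 16\, c\Cj{c}$. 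Thus $P$ factors if and only if this quadratic in $s$ has a \emph{strictly positive} real root.

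The heart of the argument is the elementary question of when a real quadratic has a positive root, which I would settle via Vieta's formulas. The product of the roots is $-\tfrac14 \Vector{c}\,\Cj{\Vector{c}}$ and their sum is $-c_0$, while a real root exists precisely when $c\Cj{c} \ge 0$. If $\Vector{c}\,\Cj{\Vector{c}} > 0$, the product of roots is negative, so the roots are real and of opposite sign and one is positive regardless of $c_0$; this is the first condition. If instead $\Vector{c}\,\Cj{\Vector{c}} \le 0$, the roots (when real) have nonnegative product, so a positive root requires a positive sum $-c_0 > 0$ together with reality $c\Cj{c} \ge 0$; this is the second condition. Conversely, either condition produces the root $s_+ = \tfrac12\bigl(-c_0 + \sqrt{c\Cj{c}}\bigr) > 0$, from which $x_0 = \pm\sqrt{s_+}$ and the formulas for $x_1, x_2, x_3$ recover an explicit zero.

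I expect the main obstacle to be bookkeeping rather than conceptual depth. One must be careful that the root $s = 0$ does not count as admissible, since it corresponds to the already-excluded case $x_0 = 0$, and that the boundary case $\Vector{c}\,\Cj{\Vector{c}} = 0$ is sorted correctly into the second condition. Verifying that the two inequality conditions partition the solvable cases exactly---neither omitting a sub-case nor mishandling the overlap where both hold---is the step demanding the most attention.
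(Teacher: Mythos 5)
Your proposal is correct and follows essentially the same route as the paper: reduce factorization to a right zero via Lemma~\ref{lem:zero-factor}, specialize the system \eqref{eq:equation-system}, rule out $x_0 = 0$ using $c \notin \R$, substitute $x_\ell = -c_\ell/(2x_0)$, and decide when the resulting quartic in $x_0$ has a nonzero real root. The only difference is presentational: the paper inspects the explicit nested-radical expressions \eqref{eq:zeros-polynomial-b-real-c-nonreal} for $x_0$ and negates the failure cases, whereas you apply Vieta's formulas to the quadratic in $s = x_0^2$, which organizes the same case analysis somewhat more transparently.
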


\begin{proof}
  We solve the equation system \eqref{eq:equation-system} which, in the current
  setup, reads as
  \begin{align*}
    x_0^2 - x_1^2 + x_2^2 + x_3^2 + c_0 &= 0, \\
    2 x_0 x_1 + c_1 &= 0, \\
    2 x_0 x_2 + c_2 &= 0, \\
    2 x_0 x_3 + c_3 &= 0.
  \end{align*}
  The assumption $x_0 = 0$ implies $c_1 = c_2 = c_3 = 0$ and contradicts $c
  \notin \R$. Hence, we can plug $ x_1 = -\frac{c_1}{2 x_0}$, $x_2 =
  -\frac{c_2}{2 x_0}$, $x_3 = -\frac{c_3}{2 x_0}$, in the first equation and
  obtain $\frac{1}{4 x_0^2} (4 x_0^4 + 4 c_0 x_0^2 - c_1^2 + c_2^2 + c_3^2) = 0$
  with up to four distinct solutions
  \begin{equation}
    \label{eq:zeros-polynomial-b-real-c-nonreal}
    \begin{aligned}
      x_0 = \pm \frac{1}{\sqrt{2}}\sqrt{- c_0 \pm \sqrt{c_0^2 + c_1^2 - c_2^2 - c_3^2}} &= \pm \frac{1}{\sqrt{2}}\sqrt{- c_0 \pm \sqrt{c \Cj{c}}} \\
      &= \pm \frac{1}{\sqrt{2}}\sqrt{- c_0 \pm \sqrt{c_0^2 + \Vector{c}\Cj{\Vector{c}}}}
    \end{aligned}
  \end{equation}
  over $\C$. We are only interested in real solutions and it is easy to see that
  all expressions in \eqref{eq:zeros-polynomial-b-real-c-nonreal} yield non-real
  values if and only if $c \Cj{c} < 0$ or $c \Cj{c} \ge 0$, $c_0 > 0$ and
  $\Vector{c} \ \Cj{\Vector{c}} < 0$. We already verified $x_0 = 0$ to be
  invalid whence also the case $\Vector{c} \ \Cj{\Vector{c}} = 0$ and $c_0 \ge
  0$ is excluded.

  Hence, $P$ has no split quaternion zeros and therefore does not admit a
  factorization if and only if
  \begin{itemize}
  \item $c \Cj{c} < 0$ or
  \item $c \Cj{c} \ge 0$, $c_0 > 0$ and $\Vector{c} \ \Cj{\Vector{c}} < 0$ or
  \item $\Vector{c} \ \Cj{\Vector{c}} = 0$ and $c_0 \ge 0$.
  \end{itemize}
  The negation of these conditions are easily shown to be equivalent to
  \begin{itemize}
  \item $\Vector{c} \ \Cj{\Vector{c}} > 0$ or
  \item $c \Cj{c} \geq 0$ and $c_0 < 0$,
  \end{itemize}
  thus finishing the proof.
\end{proof}

Still assuming that $P$ is monic, we are left with the case where $b \notin \R$.
Due to the assumed dependency of the coefficients there exist $\lambda$, $\mu
\in \R$ such that $c = \lambda + \mu b$ and we can write $P = t^2 + b t +
\lambda + \mu b$.

\begin{thm}
  \label{th:factor-polynomial-b-nonreal-c-nonreal}
  Consider the split quaternion polynomial $P = t^2 + b t + \lambda + \mu b \in
  \SPQ[t]$, where $b \in \SPQ \setminus \R$, $b_0 = 0$ and $\lambda$, $\mu \in
  \R$.
  \begin{itemize}
  \item If $b \Cj{b} > 0$, then $P$
    admits a factorization.
  \item Provided that $b \Cj{b} = 0$, then $P$ admits a factorization if and
    only if $\lambda + \mu^2 = 0$ or $\lambda < 0$.
  \item Provided that $b \Cj{b} < 0$, then $P$ admits a factorization if and
    only if $\lambda + \mu^2 = 0$ or $b \Cj{b} + 4 \lambda \le 0$ and $b \Cj{b}
    + 4 \lambda \le 4 \mu \sqrt{- b \Cj{b}} \le -(b \Cj{b} + 4 \lambda)$.
  \end{itemize}
\end{thm}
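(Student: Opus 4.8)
The plan is to invoke Lemma~\ref{lem:zero-factor} and reduce the existence of a factorization to the existence of a right zero $x$ of $P$, i.e.\ a solution of $x^2 + bx + \lambda + \mu b = 0$. Setting $n \coloneqq b\Cj{b}$ and using $\Cj{b} = -b$ together with the identity $x^2 = 2x_0 x - x\Cj{x}$ (where $x_0 = \Scalar{x}$), I would first rewrite this equation as
\begin{equation*}
  (2x_0 + b)\,x = x\Cj{x} - \lambda - \mu b,
\end{equation*}
recording also $b^2 = -n$ and $(2x_0+b)\Cj{(2x_0+b)} = 4x_0^2 + n$. The structural observation driving the proof is that \emph{whenever the factor $2x_0+b$ is invertible}, i.e.\ $4x_0^2 + n \neq 0$, solving for $x$ expresses it as a real linear combination of $1$ and $b$. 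Hence, apart from the exceptional zeros treated last, it suffices to search for zeros of the form $x = r + s b$ with $r$, $s \in \R$.

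Substituting the ansatz $x = r + sb$ into $P(x)=0$ and separating the real part from the $b$-part yields the real system
\begin{equation*}
  r(2s+1) = -\mu, \qquad r^2 = (s^2+s)\,n - \lambda .
\end{equation*}
Eliminating $r$ on the locus $2s+1 \neq 0$ turns this into a single quadratic in $W \coloneqq (2s+1)^2 \ge 0$,
\begin{equation*}
  \tfrac{n}{4}W^2 - \bigl(\tfrac{n}{4}+\lambda\bigr)W - \mu^2 = 0,
\end{equation*}
whose solvability with a \emph{positive} root is the quantity I would analyse; the degenerate line $2s+1=0$, which forces $\mu = 0$ and $r^2 = -\tfrac{n}{4}-\lambda$, is handled as a side case. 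Before the case analysis I would dispose of the alternative $\lambda+\mu^2=0$ once and for all by exhibiting the explicit factorization $P = (t - \mu + b)(t + \mu)$, which is valid for every $b$.

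The three cases then correspond to the sign of $n$. For $n>0$ the product of the roots of the quadratic in $W$ is $\le 0$, so a nonnegative root always exists; together with the degenerate line this produces a zero in every instance, giving unconditional factorizability. For $n=0$ the equation is linear in $W$ and admits a positive root exactly when $\lambda<0$, with the point $\lambda=\mu=0$ absorbed by the degenerate line. For $n<0$ the discriminant and sign conditions for a positive root, after writing $\sqrt{-n}=\sqrt{-b\Cj{b}}$, translate precisely into the stated double inequality $b\Cj{b}+4\lambda \le 4\mu\sqrt{-b\Cj{b}} \le -(b\Cj{b}+4\lambda)$.

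The delicate part, and the step I expect to be the main obstacle, is the \emph{exceptional} family of zeros with $4x_0^2 + n = 0$, which can occur only for $n \le 0$ and is not covered by the span reduction. For these I would fix $x_0 = \pm\tfrac12\sqrt{-n}$ and return to the full real system~\eqref{eq:equation-system}: the three equations collinear with the imaginary units determine the vectorial part up to the two-dimensional kernel of left multiplication by the null element $2x_0 + b$ (the same kernel analysed in the proof of Theorem~\ref{th:left-right-ruling}), after which the remaining first equation of~\eqref{eq:equation-system} must be tested for consistency. The crux is to show that this consistency condition never yields a zero outside the region already obtained from the span analysis, so that the exceptional zeros enlarge the solution set at most on the boundary of the inequality region and therefore leave the stated criteria unchanged.
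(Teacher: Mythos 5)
Your structural reduction is correct and is a genuinely different route from the paper's: using $x^2 = 2x_0x - x\Cj{x}$ to show that every right zero with $4x_0^2 + b\Cj{b} \neq 0$ lies in the real span of $1$ and $b$, and then solving the two-variable real system $r(2s+1) = -\mu$, $r^2 = (s^2+s)\,b\Cj{b} - \lambda$. I checked the algebra: the quadratic in $W = (2s+1)^2$, the role of the degenerate line $2s+1 = 0$, the explicit factorization for $\lambda + \mu^2 = 0$, and the translation of the positive-root condition into the stated inequalities for $b\Cj{b}<0$ are all right. This gives sufficiency \emph{and} necessity-for-span-type-zeros uniformly in all three cases, whereas the paper proves sufficiency by the generic factorization algorithm applied to quadratic factors of $P\Cj{P}$ (checking that the remainder's leading coefficient has nonzero norm) and necessity by solving the real system \eqref{eq:equation-system-dependent} case by case.

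The genuine gap is the exceptional case $4x_0^2 + b\Cj{b} = 0$, which you correctly identify as the crux but never carry out: you only assert that such zeros cannot occur outside the region found by the span analysis. Without this, the ``only if'' halves of the second and third bullets are unproven, and this is exactly the part that resists generic machinery (the paper computes it explicitly for $b\Cj{b}=0$ and glosses it as ``a detailed inspection'' for $b\Cj{b}<0$), so it cannot be left as a hope. Fortunately, your own setup closes it in a few lines, more cleanly than returning to \eqref{eq:equation-system}. Put $g \coloneqq 2x_0 + b$ and $N \coloneqq x\Cj{x}$; the zero equation reads $gx = (N-\lambda) - \mu b$, and $g$ is null and nonzero. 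Left-multiplying by $\Cj{g} = 2x_0 - b$ annihilates the left-hand side; expanding $(2x_0-b)\bigl((N-\lambda) - \mu b\bigr) = 0$ and comparing coefficients of $1$ and $b$ (independent since $b \notin \R$) forces $N - \lambda = -2x_0\mu$, the other relation being automatic from $4x_0^2 + b\Cj{b} = 0$. But then the right-hand side equals $-\mu(2x_0+b) = -\mu g$, so $g(x+\mu) = 0$, i.e.\ $x = -\mu + y$ with $gy = 0$. Such a $y$ is zero or a zero divisor, hence null, so $N = \mu^2 - 2\mu\Scalar{y}$, while $\Scalar{x} = x_0$ gives $\Scalar{y} = x_0 + \mu$ and thus $N = -\mu^2 - 2\mu x_0$. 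Comparing with $N = \lambda - 2x_0\mu$ yields $\lambda + \mu^2 = 0$. Hence exceptional zeros occur only when $\lambda + \mu^2 = 0$, which is one of the stated alternatives (and for $b\Cj{b}<0$ is even contained in the inequality region, by the arithmetic--geometric mean inequality), so your criteria are indeed unchanged --- but this argument, or an equivalent computation, must be supplied for the proof to be complete.
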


\begin{proof}
  First, let us assume that $b \Cj{b} > 0$. We pick a quadratic factor $M = t^2
  + m_1 t + m_0 \in \R[t]$ of the norm polynomial $P \Cj{P}$ and compute the
  remainder polynomial $R = P - M = (b - m_1) t + \lambda + \mu b - m_0$ when
  dividing $P$ by $M$. By the generic factorization algorithm, $P$ admits a
  factorization if the leading coefficient of $R$ is invertible. This is
  guaranteed by non-negativity of its norm $(b - m_1) \Cj{(b - m_1)} = b \Cj{b}
  + m_1^2 > 0$.

  Next, we assume that $b \Cj{b} = 0$. If $\lambda + \mu^2 = 0$, then obviously
  $P = (t + \mu) (t - \mu + b)$ is a factorization. If $\lambda < 0$, then $P$
  factors as $P = (t - h_1) (t - h_2)$ where
  \begin{align*}
    h_{1,2} = \pm \sqrt{-\lambda} - \frac{1}{2} b (1 \pm \frac{\mu}{\sqrt{-\lambda}}).
  \end{align*}
  Conversely, if $P$ admits a factorization, then $P$ has a right zero. Such a
  zero is a solution of the equation system \eqref{eq:equation-system} which, in
  our case, reads as
  \begin{equation}
    \label{eq:equation-system-dependent}
    \begin{aligned}
      x_0^2 - x_1^2 + x_2^2 + x_3^2 - b_1 x_1 + b_2 x_2 + b_3 x_3 + \lambda &= 0, \\
      2 x_0 x_1 + b_1 x_0 + b_3 x_2 - b_2 x_3 + \mu b_1 &= 0, \\
      2 x_0 x_2 + b_2 x_0 + b_3 x_1 - b_1 x_3 + \mu b_2 &= 0, \\
      2 x_0 x_3 + b_3 x_0 - b_2 x_1 + b_1 x_2 + \mu b_3 &= 0.
    \end{aligned}
  \end{equation}
  Assuming that $x_0 \neq 0$, we can substitute $x_1 = -(b_1 (\mu + x_0))(2
  x_0)^{-1}$, $x_2 = -(b_2 (\mu + x_0))(2 x_0)^{-1}$, $x_3 = -(b_3 (\mu +
  x_0))(2 x_0)^{-1}$ into the first equation and obtain $(4x_0^4 + (b \Cj{b} +
  4\lambda) x_0^2 - \mu^2 b \Cj{b})(4x_0^2)^{-1} = x_0^2 + \lambda = 0$. Since
  $x_0 \neq 0$, a real solution exists if and only if $\lambda < 0$. Considering
  solutions with $x_0 = 0$ the equations system
  \eqref{eq:equation-system-dependent} simplifies to
  \begin{align*}
      -x_1^2 + x_2^2 + x_3^2 - b_1 x_1 + b_2 x_2 + b_3 x_3 + \lambda &= 0, \\
      b_3x_2 - b_2 x_3 + \mu b_1 &= 0, \\
      b_3x_1 - b_1 x_3 + \mu b_2 &= 0, \\
      -b_2x_1 + b_1 x_2 + \mu b_3 &= 0.
  \end{align*}
  The conditions $b \notin \R$ and $b \Cj{b} = 0$ imply $b_1 \neq 0$ and the
  solution set of the equation system given by the last three equations is of
  dimension one. It can be parameterized by $\{ x_1 = \alpha,\ x_2 = (b_2 \alpha -
  b_3 \mu)b_1^{-1},\ x_3 = (b_3 \alpha + b_2 \mu)b_1^{-1}\colon \alpha \in \R
  \}$. Substituting these solutions into the first equation yields $\lambda +
  \mu^2 = 0$. This concludes the proof of the second statement.

  Assuming that $b \Cj{b} < 0$, we can factor the norm polynomial as
  \begin{multline*}
    P \Cj{P} =
    (t^2 + \lambda)^2 + (t + \mu)^2 b \Cj{b}
    = (t^2 + \lambda + \sqrt{- b \Cj{b}} (t + \mu)) (t^2 + \lambda - \sqrt{- b \Cj{b}} (t + \mu)).
  \end{multline*}
  If $b \Cj{b} + 4 \lambda \le 0$ and $b \Cj{b} + 4 \lambda \le 4 \mu \sqrt{- b
    \Cj{b}} \le -(b \Cj{b} + 4 \lambda)$, then $P \Cj{P}$ has even real linear
  factors
  \begin{align*}
    P \Cj{P} &= L_1 L_2 L_3 L_4, \quad \text{where} \\
    L_{1,2} &= t + \frac{1}{2} \left( \sqrt{- b \Cj{b}} \pm \sqrt{- (b \Cj{b} + 4 \lambda) - 4 \sqrt{- b \Cj{b}} \mu} \right), \\
    L_{3,4} &= t - \frac{1}{2} \left( \sqrt{- b \Cj{b}} \pm \sqrt{- (b \Cj{b} + 4 \lambda) + 4 \sqrt{- b \Cj{b}} \mu} \right).
  \end{align*}
  Defining $M \coloneqq L_1 L_4$ and computing $R = r_1 t + r_0 = P - M \in
  \SPQ[t]$ yields a remainder polynomial with leading coefficient
  \begin{align*}
    r_1 = b -
    \frac{1}{2} \left( \sqrt{- (b \Cj{b} + 4 \lambda) + 4 \sqrt{- b \Cj{b}} \mu} +
    \sqrt{- (b \Cj{b} + 4 \lambda) - 4 \sqrt{- b \Cj{b}} \mu} \right).
  \end{align*}
  The polynomial $P$ admits a factorization by means of the generic
  factorization algorithm if the norm $r_1 \Cj{r_1} = b \Cj{b} + \frac{1}{2} (
  \sqrt{(b \Cj{b} + 4 \lambda)^2 + 16 \mu^2 b \Cj{b}} - (b \Cj{b} + 4\lambda) )$
  of $r_1$ is different from zero. This is, indeed, the case as $b \Cj{b} \neq
  0$ and $\mu^2+\lambda \neq 0$. If $\mu^2+\lambda = 0$, then $P$ admits the
  factorization $P = (t + \mu) (t - \mu + b)$ anyway. Similar to above
  considerations, a detailed inspection of the equation system
  \eqref{eq:equation-system-dependent} shows that no solutions exist if the
  conditions $\lambda + \mu^2 = 0$ or $b \Cj{b} + 4 \lambda \le 0$ and $b \Cj{b}
  + 4 \lambda \le 4 \mu \sqrt{- b \Cj{b}} \le -(b \Cj{b} + 4 \lambda)$ are
  violated.
\end{proof}

\subsection{Factorization of Monic Polynomials with Independent Coefficients}
\label{sec:factor-monic-independent}

In \cite{li18d} the authors showed that the polynomial $P$ in
\eqref{eq:polynomial} admits a factorization if its coefficients $a$, $b$, $c$
are linearly independent and the leading coefficient $a$ is invertible.
Assuming, without loss of generality, that $a = 1$, we recall this result and
provide an improved version of the second half of the proof in \cite{li18d},
namely the case where the general factorization algorithm is not applicable.

\begin{thm}[\cite{li18b}]
  \label{th:factor-polynomial-monic-independent-coefficients}
  The polynomial $P = t^2 + b t + c \in \SPQ[t]$ admits a factorization if its
  coefficients $1$, $b$, $c$ are linearly independent.
\end{thm}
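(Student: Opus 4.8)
The plan is to apply the generic factorization algorithm and to supply a separate, geometric argument exactly where it fails. First I would form the norm polynomial $P\Cj{P}\in\R[t]$. Under the standing assumption $\Scalar{b}=0$ one computes $P\Cj{P} = t^4 + (2c_0 + b\Cj{b})\,t^2 + (b\Cj{c} - cb)\,t + c\Cj{c}$, a monic real quartic with vanishing cubic coefficient, which therefore splits over $\R$ into two monic quadratic factors. Choosing one such factor $M = t^2 + m_1 t + m_0$ and forming the remainder $R = P - M = r_1 t + r_0$ with $r_1 = b - m_1$ and $r_0 = c - m_0$, the leading coefficient satisfies $r_1\Cj{r_1} = b\Cj{b} + m_1^2$. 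Whenever this is nonzero---in particular always when $b\Cj{b} > 0$---$r_1$ is invertible, $R$ has the unique right zero $h_2 = -r_1^{-1}r_0$, and the generic algorithm certifies that $t - h_2$ divides $M$ and hence $P$; Lemma~\ref{lem:zero-factor} then delivers the factorization.

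The substantive case is $R\Cj{R} = 0$, which forces $b\Cj{b}\le 0$. Here I would first exploit the hypothesis: from $\alpha + \beta r_1 + \gamma r_0 = (\alpha - \beta m_1 - \gamma m_0) + \beta b + \gamma c$ and the linear independence of $1$, $b$, $c$ it follows that $1$, $r_1$, $r_0$, and in particular $r_1$, $r_0$, are linearly independent. By Lemma~\ref{lem:null-lines} the line $\ell = [r_0]\vee[r_1]$ is then a null line of $\N$, and by Theorem~\ref{th:left-right-ruling} it is either a left or a right ruling. The right zeros of $R$ are the solutions of $r_1 h = -r_0$, i.e. the split quaternions $p$ with $-r_0 = r_1 p$; by Corollary~\ref{cor:affine-two-plane-of-zeros} these exist and form an affine two-plane precisely when $\ell$ is a right ruling. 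Invoking the left--right symmetry of the construction (if $\ell$ is a left ruling one instead produces a left factor of $P$, equivalently, by applying the right-zero argument to the conjugate polynomial $\Cj{P}$), I may assume $\ell$ is a right ruling, so that $R$ carries a two-parameter family of right zeros.

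On this family, parametrized by Theorem~\ref{th:affine-two-plane-of-zeros} as $h = u + \lambda\Cj{r_1} + \mu\Cj{r_1}\qi$ with $\lambda$, $\mu\in\R$, every member satisfies $R(h) = 0$, so $P(h) = M(h) + R(h) = M(h)$. Using $h^2 = 2\Scalar{h}\,h - h\Cj{h}$ gives $P(h) = (2\Scalar{h} + m_1)\,h + (m_0 - h\Cj{h})$, a real-linear combination of $1$ and $h$; hence for non-real $h$ the requirement $P(h) = 0$ is equivalent to the single linear condition $2\Scalar{h} + m_1 = 0$ together with the single quadratic condition $h\Cj{h} = m_0$ in the two real unknowns $\lambda$, $\mu$. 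Eliminating one unknown through the linear equation leaves one quadratic equation in one real variable, and the heart of the proof is to show that it always admits a real root. I expect this to be the main obstacle, to be overcome by feeding in the relations $r_1\Cj{r_1} = r_0\Cj{r_0} = q(r_0,r_1) = 0$ (i.e. $R\Cj{R} = 0$) to control the discriminant; the degenerate sub-case $b\Cj{b} = 0$, which forces $m_1 = 0$ so that both quadratic factors of $P\Cj{P}$ yield the same remainder leading coefficient $b$, would fit the same scheme but have its ruling reduction inspected separately.
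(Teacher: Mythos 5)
Your skeleton matches the paper's proof closely: generic algorithm when $r_1\Cj{r_1}\neq 0$; in the degenerate case, independence of $1$, $b$, $c$ gives independence of $r_1$, $r_0$, so $R$ parameterizes a null line; and a left/right reduction so that one may assume the line is a right ruling. Your reduction via the conjugate polynomial $\Cj{P}$ is a legitimate variant of the paper's device (the paper instead passes to the complementary factor $M_2$ with $P\Cj{P}=M_1M_2$, whose remainder is $R_2=-\Cj{R_1}$, hence the conjugate line; both exploit that conjugation swaps the two families of rulings). Up to that point the proposal is sound.

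The genuine gap is exactly the step you defer: you never prove that the system $2\Scalar{h}+m_1=0$, $h\Cj{h}=m_0$ has a real solution $(\lambda,\mu)$; you only express the expectation that the relations $R\Cj{R}=0$ will ``control the discriminant''. That existence statement is the entire content of the hard case, so as written the proof is incomplete. Moreover, the difficulty you anticipate is illusory, and recognizing this is the key idea you are missing: the condition $h\Cj{h}=m_0$ is \emph{not} quadratic in $(\lambda,\mu)$. Expanding $\Cj{(u+\lambda\Cj{r_1}+\mu\Cj{r_1}\qi)}\,(u+\lambda\Cj{r_1}+\mu\Cj{r_1}\qi)$, every term of degree two in $(\lambda,\mu)$ carries a factor $r_1\Cj{r_1}=0$, so the norm is affine linear; using $r_1u=-r_0$ it equals $\Cj{u}u-\lambda(r_0+\Cj{r_0})-\mu(\Cj{r_0}\qi-\qi r_0)$, where both coefficients are real. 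Hence your two conditions form a $2\times 2$ \emph{linear} system in $(\lambda,\mu)$, and there is no discriminant to control. What actually requires proof---and what the paper proves---is that this linear system is non-singular: its coefficient matrix is singular precisely when the positive parts (the $1,\qi$-components) of $r_0$ and $r_1$ are linearly dependent, and that would produce a nonzero point of the null line $[r_0]\vee[r_1]$ with vanishing positive part, whose norm $-(x_2^2+x_3^2)$ is strictly negative, contradicting that the whole line lies on $\N$. Until you replace the hoped-for discriminant argument by this linearity observation and non-singularity check (or some equivalent), the proposal is a plan rather than a proof.
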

\begin{proof}
  Let $M_1 \in \R[t]$ be a monic quadratic factor of the norm polynomial $P
  \Cj{P}$ and compute the corresponding linear remainder polynomial $R_1 \in
  \SPQ[t]$ such that $P = M_1 + R_1$. If $R_1 \Cj{R_1} \neq 0$, one can compute
  a factorization of $P$ using the generic factorization algorithm. Hence, we
  continue by assuming that $R_1 \Cj{R_1} = 0$. Linear independence of the
  coefficients of $P$ implies linear independence of the coefficients of $R_1$.
  Consequently, $R_1$ parameterizes a null line (Lemma~\ref{lem:null-lines}).
  Consider the complementary monic quadratic factor $M_2 \in \R[t]$ of $P\Cj{P}$
  defined by $P \Cj{P} = M_1 M_2$, and the corresponding remainder polynomial
  $R_2$ such that $P = M_2 + R_2$. From
  \begin{equation*}
    \begin{aligned}
      P \Cj{P} = (M_1 + R_1) \Cj{(M_1 + R_1)} = M_1^2 + M_1 R_1 + M_1 \Cj{R_1} = M_1 (M_1 + R_1 + \Cj{R_1})
    \end{aligned}
  \end{equation*}
  we conclude that $M_2 = M_1 + R_1 + \Cj{R_1}$ and $R_2 = -\Cj{R_1}$. Hence,
  $R_2$ parameterizes a null line as well. The two null lines belong to different
  families of rulings of $\N$. Without loss of generality we assume that the
  null line parameterized by $R_1$ is a right ruling. Moreover, we can assume
  that the linear coefficient of $M_1$ is zero by applying a suitable parameter
  transformation ($t \mapsto t + \tilde m$ where $\tilde m \in \R$) to~$P$.

  Next we will show that $M_1 = t^2 + m \in \R[t]$ and $R_1 = r_1 t + r_0$ have
  a common right zero. By Corollary~\ref{cor:affine-two-plane-of-zeros}, there
  exists an $h \in \SPQ$ such that $-r_0 = r_1 h$. Although
  Theorem~\ref{th:affine-two-plane-of-zeros} provides an explicit formula to
  compute such an $h \in \SPQ$ in terms of $r_1$ and $r_0$, any $h \in \SPQ$
  fulfilling the relation $-r_0 = r_1 h$ will do and we choose one. Then the
  two-parametric set of right zeros of $R_1$ can be parameterized by $h + \lambda
  \Cj{r_1} + \mu \Cj{r_1} \qi$ where $\lambda$, $\mu \in \R$. The norm of such
  an element reads as
  \begin{align*}
    &\phantom{=}\ \Cj{(h + \lambda \Cj{r_1} + \mu \Cj{r_1} \qi)} (h + \lambda \Cj{r_1} + \mu \Cj{r_1} \qi) \\
    &= (\Cj{h} + \lambda r_1 - \mu \qi r_1) (h + \lambda \Cj{r_1} + \mu \Cj{r_1} \qi) \\
    &= \Cj{h} h + \lambda \Cj{h} \Cj{r_1} + \mu \Cj{h} \Cj{r_1} \qi + \lambda r_1 h - \mu \qi r_1 h\\
    &= \Cj{h} h - \lambda \Cj{r_0} - \mu \Cj{r_0} \qi - \lambda r_0 + \mu \qi r_0 \\
    &= \Cj{h} h - \lambda (\Cj{r_0} + r_0) - \mu (\Cj{r_0} \qi - \qi r_0).
  \end{align*}
  We choose $\lambda$ and $\mu$ such that this norm is equal to $m$, the
  constant coefficient of $M_1$, and in addition the real part of $h + \lambda
  \Cj{r_1} + \mu \Cj{r_1} \qi$ is equal to zero. This is possible because the
  coefficient matrix
  \begin{equation*}
    \begin{pmatrix}
      -r_0 - \Cj{r_0} & -\Cj{r_0}\qi + \qi r_0 \\
      \Scalar{\Cj{r_1}} & \Scalar{\Cj{r_1}\qi}
    \end{pmatrix}
  \end{equation*}
  of the underlying system of linear equations is singular precisely if the
  positive parts of $r_0$ and $r_1$ are linearly dependent. But then the null
  line spanned by $[r_0]$ and $[r_1]$ contains a point with zero positive part
  which is not possible. With above's choice we have that $h + \lambda \Cj{r_1}
  + \mu \Cj{r_1} \qi$ satisfies all conditions from
  Lemma~\ref{lem:factor-polynomial-b-real-c-real}. Hence, $h + \lambda \Cj{r_1}
  + \mu \Cj{r_1} \qi$ is not only a right zero of $R_1$ but also a zero of
  $M_1$. Therefore, it is also a zero of $P$ whence $P$ admits a factorization.
\end{proof}

The following example illustrates the ``interesting'' case in the proof of
Theorem~\ref{th:factor-polynomial-monic-independent-coefficients}.

\begin{example}
  Consider the polynomial $P = t^2 + (1 + \qk) t + 2 + \qi + \qj + \qk \in
  \SPQ$. Its norm polynomial factors into $P \Cj{P} = M_1 M_2$ with $M_1 = t^2 +
  1$ and $M_2 = t^2 + 2t + 3$. The respective remainder polynomials $R_1$, $R_2
  \in \SPQ$ such that $P = M_1 + R_1 = M_2 + R_2$ read as $R_1= (1 + \qk) t + 1
  + \qi + \qj + \qk$ and $R_2 = (\qk - 1) t - 1 + \qi + \qj + \qk$. Both, $R_1$
  and $R_2$ are null lines since $R_1 \Cj{R_1} = R_2 \Cj{R_2} = 0$, whereas only
  $R_1$ is a right ruling of $\N$. According to
  Theorem~\ref{th:affine-two-plane-of-zeros}, the two-parametric set of right
  zeros of $R_1 = r_1 t + r_0 = (1 + \qk) t + 1 + \qi + \qj + \qk$ is
  parameterized by $h + \lambda \Cj{r_1} + \mu \Cj{r_1} \qi$ with $\lambda$, $\mu
  \in \R$ and $h = - 1 - \qi \in \SPQ$. The conditions on the norm and the real
  part of $h + \lambda \Cj{r_1} + \mu \Cj{r_1} \qi$ yield the two equations
  $\lambda - 1 = 0$ and $1 - 2 \lambda - 2 \mu = 0$ with the unique solution
  $\lambda = 1$ and $\mu = -\frac{1}{2}$. Indeed, the split quaternion $h +
  \Cj{r_1} - \frac{1}{2} \Cj{r_1} \qi = -\frac{3}{2} \qi + \frac{1}{2} \qj -
  \qk$ is a zero of $P$ and right division of $P$ by $t + \frac{3}{2} \qi -
  \frac{1}{2} \qj + \qk$ yields the factorization $P = (t + 1 - \frac{3}{2} \qi
  + \frac{1}{2} \qj) (t + \frac{3}{2} \qi - \frac{1}{2} \qj + \qk)$.

  Note that the two polynomials $M_1$ and $M_2$ are irreducible, hence there is
  no quadratic factor of the norm polynomial $P \Cj{P}$ yielding a non-null line
  as remainder polynomial and therefore the possibility to avoid above's
  procedure.
\end{example}

\subsection{Geometric Interpretation for Factorizability of Monic Polynomials}
\label{sec:geometric-interpretation}

Theorem~\ref{th:factor-polynomial-b-real-c-nonreal} and
Theorem~\ref{th:factor-polynomial-b-nonreal-c-nonreal} relate factorizability of
a quadratic split quaternion polynomial to validity of certain inequalities.
Some of these conditions are not very intuitive but necessary in order to cover
all special cases by the algebraic approach. However, we can give an alternative
characterization of factorizability by interpreting the factorization algorithm
for quadratic split quaternions geometrically. It turns out that this
alternative characterization covers the statement in
Theorem~\ref{th:factor-polynomial-monic-independent-coefficients} as well.
Hence, the geometrical approach allows a unified characterization of
factorizability for quadratic split quaternions with invertible leading
coefficient without inconvenient case distinctions.

Consider a monic split quaternion polynomial $P = t^2 + b t + c \in \SPQ[t]$ and
a monic real polynomial $M \in \R[t]$, both of degree two. Let $t_1$, $t_2 \in
\C$ be the two roots of $M = (t - t_1)(t - t_2)$. Denote by $R = P - M$ the
remainder polynomial of $P$ divided by $M$. Because of $M(t_1) = M(t_2) = 0$ we
have
\begin{equation*}
  P(t_1) = M(t_1) + R(t_1) = R(t_1)
  \quad\text{and}\quad
  P(t_2) = M(t_2) + R(t_2) = R(t_2)
\end{equation*}
and, provided that $t_1 \neq t_2$, the remainder $R$ is the unique interpolation
polynomial with respect to the interpolation data set $\{ (t_1, P(t_1)),$ $(t_2,
P(t_2)) \}$. Hence $R$ parameterizes the straight line $[P(t_1)] \vee [P(t_2)]$
or, if these two points coincide, the point $[P(t_1)] = [P(t_2)]$.

If $t_1 = t_2$ and thus $P(t_1) = P(t_2)$, the linear interpolation polynomial
is not well defined. Instead, the remainder polynomial $R$ describes the tangent
of the rational curve parameterized by $P$ at the point $[P(t_1)]$. In order to
see this, we compute
\begin{equation*}
  P(t_1) + P'(t_1) (t - t_1) = t_1^2 + b t_1 + c + (2 t_1 + b)(t - t_1) = (2 t_1 + b ) t + c - t_1^2.
\end{equation*}
It is equal to the remainder polynomial
\begin{align*}
  R = P - M = t^2 + b t + c - (t - t_1)(t - t_1) = (2 t_1 + b) t + c - t_1^2.
\end{align*}
Note that $P'(t_1)$ might vanish, that is, $b = -2 t_1$. In this case, the
parametric representation of the tangent as well as the remainder polynomial are
constant and equal to $c-t_1^2$.

In the context of the generic factorization algorithm the real polynomial $M$ is
one of the quadratic factors of the norm polynomial $P \Cj{P}$ and $t_1$, $t_2
\in \C$ are parameter values where the rational curve parameterized by $P$
intersects the null quadric $\N$. Hence, the remainder polynomial $R$
parameterizes the line $[P(t_1)] \vee [P(t_2)]$ spanned by these two intersection
points provided $[P(t_1)] \neq [P(t_2)]$. If these points are equal and $t_1
\neq t_2$, it is a linear parametrization of the single point $[P(t_1)] =
[P(t_2)]$. If, finally $t_1 = t_2$ (and hence also $P(t_1) = P(t_2)$), $R$
parameterizes the tangent of the rational curve $P$ in $[P(t_1)]$ (or again a
single point if $P'(t_1) = 0$).

\begin{defn}
  Consider a monic split quaternion polynomial $P = t^2 + b t + c \in \SPQ[t]$
  of degree two. Let $t_1$, $t_2$, $t_3$, $t_4 \in \C$ be the four roots of the
  norm polynomial $P \Cj{P} \in \R[t]$. We define the (at most six)
  \emph{remainder polynomials} of $P$ by $R_{ij} \coloneqq P - M_{ij}\in
  \SPQ[t]$ where $M_{ij} \coloneqq (t- t_i)(t-t_j) \in \R[t]$ for $i$, $j \in \{
  1,2,3,4 \}$ and $i < j$.
\end{defn}

Note that we only consider remainder polynomials that have \emph{real} split
quaternion coefficients, that is, we only use quadratic factors $M_{ij} \in
\R[t]$. The curve parameterized by $P$ intersects the null quadric $\N$ in four
points $[P(t_1)]$, $[P(t_2)]$, $[P(t_3)]$, $[P(t_4)] \in \P(\SPQ)$. Their
respective parameter values $t_1$, $t_2$, $t_3$, $t_4 \in \C$ are the four roots
of the norm polynomial $P \Cj{P}$. Hence, the polynomials $M_{ij} \in \R[t]$ are
the real quadratic factors of $P \Cj{P}$ and the remainder polynomials $R_{ij}
\in \SPQ[t]$ are the interpolation polynomials with respect to the interpolation
data sets $\{ (t_i, P(t_i)),$ $(t_j, P(t_j)) \}$. The interpolation polynomials
are defined in above's sense, i.e. they can be constant or, if $t_i = t_j$, may
parameterize the tangent of the curve at the point $[P(t_i)]$.

\begin{lem}
  \label{lem:remainder-polynomials}
  Let $P = t^2 + b t + c \in \SPQ[t]$ be a split quaternion polynomial and $R =
  r_1 t + r_0 \in \SPQ[t]$ be one of its remainder polynomials. If $R$ is of
  degree one, then $R$ has a either a unique root, $R$ parameterizes a null line
  or $r_1$ and $r_0$ are linearly dependent.
\end{lem}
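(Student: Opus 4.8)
The plan is to exploit the defining feature of a remainder polynomial, namely that $R = P - M$ where $M = M_{ij} \in \R[t]$ is a \emph{real} monic quadratic factor of the norm polynomial $P\Cj{P}$. Writing $R = r_1 t + r_0$ with $r_1 \neq 0$ (degree one), the decisive observation I would establish first is that the real polynomial $R\Cj{R}$ is a scalar multiple of $M$, namely
\[
  R\Cj{R} = r_1 \Cj{r}_1 \, M.
\]
To obtain this, note that $\Cj{R} = \Cj{P} - M$ because $M$ is real, hence central and self-conjugate, so that $R\Cj{R} = P\Cj{P} - M(P + \Cj{P}) + M^2$. Each summand on the right is divisible by $M$: the middle and last terms obviously, and $P\Cj{P}$ because $M$ is by construction a factor of it. Thus $M \mid R\Cj{R}$. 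Since $\deg R \le 1$ forces $\deg(R\Cj{R}) \le 2$ while $\deg M = 2$, the cofactor is a real constant, and comparing leading coefficients (using that $M$ is monic and that $r_1\Cj{r}_1$ is the leading coefficient of $R\Cj{R}$, as in the computation in Lemma~\ref{lem:null-lines}) identifies this constant as $r_1\Cj{r}_1$.

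With this identity in hand the trichotomy follows from a single case distinction on whether $r_1$ is invertible. If $r_1\Cj{r}_1 \neq 0$, then $r_1$ is invertible and the equation $r_1 x + r_0 = 0$ has the unique solution $x = -r_1^{-1} r_0$, so $R$ has a unique root. If instead $r_1\Cj{r}_1 = 0$, the identity gives $R\Cj{R} = 0$; then Lemma~\ref{lem:null-lines} applies as soon as $r_0$ and $r_1$ are linearly independent and shows that $[r_0] \vee [r_1]$ is a null line, i.e.\ $R$ parameterizes a null line. The only remaining possibility is that $r_0$ and $r_1$ are linearly dependent. These three outcomes are exactly the three alternatives in the statement.

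The only substantive step is the norm identity $R\Cj{R} = r_1\Cj{r}_1 M$; once it is secured, everything else is immediate. I expect the point requiring the most care to be the justification that $M$ divides $P\Cj{P}$ in every admissible configuration of the roots $t_i$, $t_j$, and I would make this explicit. This is precisely what the restriction to \emph{real} factors $M_{ij} \in \R[t]$ buys us: for such a factor the pair $t_i$, $t_j$ is either a pair of (possibly equal) real roots or a complex-conjugate pair, and in each case $(t - t_i)(t - t_j)$ is a genuine real divisor of $P\Cj{P}$. A pleasant consequence is that no separate treatment of the double-root (tangent) case is needed: the divisibility argument is purely algebraic and indifferent to whether $t_i = t_j$, so all cases collapse into the one clean identity above.
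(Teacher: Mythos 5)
Your proof is correct, and it takes a genuinely different route from the paper's. The paper argues geometrically: writing $M = (t-t_1)(t-t_2)$, it notes that the line $\ell$ parameterized by $R$ meets $\N$ in $[R(t_1)] = [P(t_1)]$, $[R(t_2)] = [P(t_2)]$ and $[r_1] = [R(\infty)]$; if these are three distinct points, $\ell$ must lie in $\N$, while if $[R(t_1)] = [R(t_2)]$, independence of $r_0$ and $r_1$ forces $t_1 = t_2$, so that $\ell$ is the tangent of the curve parameterized by $P$ at that point, hence tangent to $\N$, and the further intersection point $[r_1]$ again forces $\ell \subset \N$. That argument leans on the preceding discussion identifying remainder polynomials with interpolation polynomials and tangents, and it needs a case distinction on the configuration of $t_1$, $t_2$. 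Your identity $R\Cj{R} = r_1\Cj{r_1}\,M$ replaces all of this with one divisibility computation: each of $P\Cj{P}$, $M(P+\Cj{P})$ and $M^2$ lies in $\R[t]$ and is divisible by $M$, the degree bound $\deg(R\Cj{R}) \le 2$ together with monicity of $M$ pins the cofactor down to the constant $r_1\Cj{r_1}$, and then Lemma~\ref{lem:null-lines} delivers the null-line alternative directly when $r_1\Cj{r_1} = 0$ and the coefficients are independent (your justification that the real factors $M_{ij}$ genuinely divide $P\Cj{P}$ in $\R[t]$ is the right point to make explicit, and it holds). What your approach gives up is the geometric picture --- the explicit identification of the intersection points of $\ell$ with $\N$, which the paper reuses in its geometric interpretation of factorizability; what it buys is uniformity (no tangency case), and in fact a quantitatively stronger conclusion, since proportionality of $R\Cj{R}$ to $M$ is more than the lemma asks for. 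A minor bonus: your formula $-r_1^{-1}r_0$ for the unique root also fixes the sign slip in the paper's $r_1^{-1}r_0$.
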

\begin{proof}
  If $r_1 \Cj{r_1} \neq 0$, then $r_1^{-1} r_0 \in \SPQ$ is the unique root of
  $R$. Hence, we assume that $r_1 \Cj{r_1} = 0$. Moreover, we assume that $r_0$
  and $r_1$ are linearly independent, that is, $R$ parameterizes a straight line
  $\ell$ in $\P(\SPQ)$. In order to show that $\ell$ is a null line, we show
  that there are at least three intersection points between $\N$ and $\ell$. One
  of them is $[r_1] = [R(\infty)] \coloneqq
  [\lim_{t\to\infty}t^{-\deg{P}}P(t)]$.

  Let $M = (t-t_1)(t-t_2) \in \R[t]$ be the quadratic factor of $P \Cj{P}$ such
  that $P = M + R$. Further intersection points of $\N$ and $\ell$ are $[R(t_1)]
  = [P(t_1)]$ and $[R(t_2)] = [P(t_2)]$. Thus we have found three different
  intersection points unless $[R(t_1)] = [R(t_2)]$. Because $R$ is a linear
  polynomial with independent coefficients, equality of $[R(t_1)]$ and
  $[R(t_2)]$ implies $t_1 = t_2$ and $[P(t_1)] = [R(t_1)] = [R(t_2)] =
  [P(t_2)]$. As shown at the beginning of this subsection, $\deg R = 1$ and
  independence of $r_1$ and $r_0$ implies that this point is a regular point of
  the rational curve $P$ and $\ell$ is its tangent. We conclude that $\ell$ is
  also tangent to $\N$ in $[R(t_1)] = [R(t_2)]$. Since it also intersects $\N$
  in one further point $[r_1]$, it is a null line.
\end{proof}

\begin{rmk}
  If $P$ has a real linear factor $t - r \in \R[t]$, then $M = (t-r)^2$ is a
  quadratic factor of $P\Cj{P}$ and $t - r$ is a linear factor of the
  corresponding remainder polynomial $R$. In this case, the coefficients $r_1$
  and $r_0$ in Lemma~\ref{lem:remainder-polynomials} are linearly dependent.
  Conversely, linear dependency of $r_1$ and $r_0$ in
  Lemma~\ref{lem:remainder-polynomials} is equivalent to $R$ having a real root
  $r \in \R$. If $r$ is also a root of $M$, then $P$ has the real factor $t - r
  \in \R[t]$. In this case, factorizability is obvious whence we exclude it in
  the following.
\end{rmk}

\begin{thm}
  \label{th:factor-polynomial-monic-universal}
  The polynomial $P = t^2 + b t + c \in \SPQ[t]$ without a real factor admits a
  factorization if and only if there is a polynomial $R \in \SPQ[t]$ of degree
  one with linearly independent coefficients among the set of its remainder
  polynomials.
\end{thm}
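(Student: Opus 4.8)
The plan is to prove the two implications separately. For the direction ``$\Leftarrow$'', suppose $R = r_1 t + r_0$ is a remainder polynomial of degree one whose coefficients are linearly independent, and let $M = P - R$ be the associated real quadratic factor of $P\Cj{P}$. By Lemma~\ref{lem:remainder-polynomials}, linear independence of $r_1$ and $r_0$ leaves only two possibilities. If $r_1\Cj{r_1}\neq 0$, then $R$ has a unique root and, since $R\Cj{R}$ has the nonzero leading coefficient $r_1\Cj{r_1}$, the generic factorization algorithm applies verbatim and produces a factorization of $P$. If $r_1\Cj{r_1}=0$, then $R$ parameterizes a null line (Lemma~\ref{lem:null-lines}), and I would reuse the construction from the proof of Theorem~\ref{th:factor-polynomial-monic-independent-coefficients}: after a real parameter shift we may assume $M = t^2 + m$, and (assuming without loss of generality that $R$ is a right ruling) Corollary~\ref{cor:affine-two-plane-of-zeros} together with Theorem~\ref{th:affine-two-plane-of-zeros} parameterizes the two-dimensional set of right zeros of $R$ by $h + \lambda\Cj{r_1} + \mu\Cj{r_1}\qi$. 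One then chooses $\lambda,\mu$ so that this element has vanishing real part and norm $m$, making it a common zero of $R$ and $M = t^2+m$ by Lemma~\ref{lem:factor-polynomial-b-real-c-real}, hence a zero of $P = M + R$.

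The one point to verify in this reused argument is nonsingularity of the relevant $2\times 2$ coefficient matrix, which amounts to linear independence of the positive parts of $r_0$ and $r_1$. This holds for \emph{any} null line: a nonzero split quaternion with vanishing positive part would satisfy $x_2^2 + x_3^2 = 0$ and hence be zero, so no point of a null line has vanishing positive part, and a nontrivial real combination of $r_0,r_1$ with vanishing positive part is therefore impossible. Consequently the construction only uses that $R$ is a null line with independent coefficients, not the global independence of $1$, $b$, $c$ assumed in Theorem~\ref{th:factor-polynomial-monic-independent-coefficients}.

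For the direction ``$\Rightarrow$'', suppose $P = (t-h_1)(t-h_2)$. Since $(t-h_2)(t-\Cj{h_2}) = t^2 - 2\Scalar{h_2}t + h_2\Cj{h_2}\in\R[t]$ is real and commutes with $t-h_1$, the norm polynomial factors as $P\Cj{P} = \bigl((t-h_1)(t-\Cj{h_1})\bigr)\bigl((t-h_2)(t-\Cj{h_2})\bigr)$ into two real quadratics; in particular $N_2 \coloneqq (t-h_2)(t-\Cj{h_2})$ is one of the admissible factors $M_{ij}$. A direct computation gives the associated remainder $R \coloneqq P - N_2 = (\Cj{h_2}-h_1)(t-h_2)$, so that $r_1 = \Cj{h_2}-h_1$ and $r_0 = -r_1 h_2$, and $h_2$ is a root of $R$. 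Because $P$ has no real factor we have $h_2\notin\R$, and $r_1 = 0$ would force $h_1 = \Cj{h_2}$ and hence $P = N_2\in\R[t]$; thus $r_1\neq 0$ as soon as $P$ is not real. If $r_1$ is invertible, then linear dependence $r_0 = \gamma r_1$ would yield $r_1(h_2+\gamma) = 0$ and $h_2 = -\gamma\in\R$, a contradiction, so $R$ has independent coefficients and is a good remainder. If $r_1$ is a nonzero zero divisor, $R$ is a null line, and hence a good remainder, precisely when $r_0$ and $r_1$ remain independent.

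The main obstacle is therefore the degenerate subcase in which $r_1$ is a nonzero zero divisor and $r_0 = -r_1 h_2$ is a real multiple of $r_1$, i.e.\ $r_1(h_2+\gamma)=0$ for some $\gamma\in\R$, so that $R = P - N_2$ collapses to a point rather than a genuine line. To resolve it I would pass to the complementary factor $(t-h_1)(t-\Cj{h_1})$, whose remainder has leading coefficient of the \emph{same} norm $h_1\Cj{h_1} + h_2\Cj{h_2} - 2\Scalar{h_1 h_2}$ (using $\Scalar{h_1 h_2} = \Scalar{h_2 h_1}$), and analyze the resulting configuration of the four intersection points on $\N$; alternatively, in these dependent-coefficient situations one falls back on the explicit solvability criteria of Theorem~\ref{th:factor-polynomial-b-real-c-nonreal} and Theorem~\ref{th:factor-polynomial-b-nonreal-c-nonreal} to exhibit a secant or tangent line that is a genuine (possibly null) line. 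Finally, the purely real polynomials $P = t^2 + c_0\in\R[t]$ with $c_0>0$ form a genuine exception to be excluded: there every factorization has $\Scalar{h_2}=0$ and $h_1 = \Cj{h_2}$, so all remainders vanish, yet $P$ factors by Corollary~\ref{cor:factor-polynomial-b-real-c-real}; this degenerate case is already covered completely by Lemma~\ref{lem:factor-polynomial-b-real-c-real}.
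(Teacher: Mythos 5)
Your ``$\Leftarrow$'' direction is essentially the paper's: generic algorithm when $r_1\Cj{r_1}\neq 0$, otherwise reuse of the construction from Theorem~\ref{th:factor-polynomial-monic-independent-coefficients}, with the nonsingularity of the $2\times 2$ matrix re-verified (correctly) from the fact that no point of a null line has vanishing positive part. One small point: ``without loss of generality $R$ is a right ruling'' is justified exactly as in the paper, by passing to the complementary quadratic factor, whose remainder is $-\Cj{R}$ and hence a right ruling with again independent coefficients; you should say this rather than assume it.

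The genuine gap is in ``$\Rightarrow$'', in the subcase you label the main obstacle: $r_1=\Cj{h_2}-h_1$ a nonzero zero divisor and $r_0=\gamma r_1$, i.e.\ $r_1(h_2+\gamma)=0$ for some $\gamma\in\R$. You leave this unresolved and propose two escape routes, but the case is in fact \emph{empty} under the hypothesis that $P$ has no real factor, by pushing your own invertible-case argument one step further: if $h_2+\gamma=0$ then $h_2\in\R$ and $t-h_2$ is a real factor; otherwise $r_1(h_2+\gamma)=0$ with $r_1\neq 0$ forces $h_2+\gamma$ to be a zero divisor, so $N_2(-\gamma)=(\gamma+h_2)\Cj{(\gamma+h_2)}=0$, while also $R(-\gamma)=-r_1(\gamma+h_2)=0$; hence $P(-\gamma)=N_2(-\gamma)+R(-\gamma)=0$ and $t+\gamma$ is a real factor of $P$ by Lemma~\ref{lem:zero-factor} --- contradiction. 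This is precisely how the paper concludes, and the argument is uniform in $r_1$ (no invertible/zero-divisor case split is needed at all). Note also that your proposed fallbacks would not have worked: in this degenerate case $R\Cj{R}=r_1\Cj{r_1}(t+\gamma)^2=0$, so the complementary remainder is $-\Cj{R}=-\Cj{r_1}(t+\gamma)$, whose coefficients are again dependent, i.e.\ it collapses simultaneously; and Theorems~\ref{th:factor-polynomial-b-real-c-nonreal} and \ref{th:factor-polynomial-b-nonreal-c-nonreal} certify factorizability, not the existence of a degree-one remainder with independent coefficients, which is what the ``only if'' direction asserts. Your closing remark about $P=t^2+c_0$ is correct but is not an additional exception to handle: such a $P$ is itself a real factor of positive degree, so it is excluded by the hypothesis; it merely shows the hypothesis cannot be dropped.
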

\begin{proof}
  Without loss of generality we assume that the remainder polynomial $R_{12}$
  has degree one and its coefficients are linearly independent. If the leading
  coefficient of $R_{12}$ is invertible, $R_{12}$ has a unique root and then the
  generic factorization algorithm yields a factorization of $P$. If the leading
  coefficient is not invertible then $R_{12}$ parameterizes a null line by
  Lemma~\ref{lem:remainder-polynomials}. Provided that $R_{12}$ parameterizes a
  right ruling of $\N$, then we have seen in the proof of
  Theorem~\ref{th:factor-polynomial-monic-independent-coefficients} that $P$
  admits a factorization. If $R_{12}$ parameterizes a left ruling of $\N$, the
  complementary remainder polynomial $R_{34}$ parameterizes, again by the proof
  of Theorem~\ref{th:factor-polynomial-monic-independent-coefficients}, a right
  ruling and $P$ once more admits a factorization.

  Conversely assume that $P$ admits a factorization, that is $P = (t-h_1)
  (t-h_2)$ with $h_1$, $h_2 \in \SPQ$. Then
  \begin{equation*}
    \begin{aligned}
      P \Cj{P} = (t-h_1) (t-h_2) \Cj{((t-h_1) (t-h_2))} = (t-h_1) \Cj{(t-h_1)} (t-h_2) \Cj{(t-h_2)}
    \end{aligned}
  \end{equation*}
  and $M = (t-h_2) \Cj{(t-h_2)}$ is a quadratic factor of the norm polynomial $P
  \Cj{P}$. We compute the according remainder polynomial $R \in \SPQ[t]$ such
  that $P = M + R$. Since $t-h_2$ is a right factor of $M = \Cj{(t-h_2)}
  (t-h_2)$, it is also a right factor of $R = P - M$. Hence, there exists a
  split quaternion $r \in \SPQ$ such that $R = r (t - h_2)$. If $r = 0$, we have
  $P = M$ which contradicts the assumption that $P$ has no real factor. Thus,
  $R$ has degree one. In order to show independence of its coefficients we
  assume the opposite, i.e. there exists a real number $\alpha \in \R$ such that
  $0 = \alpha r - r h_2 = r (\alpha - h_2)$. Obviously $\alpha$ is a root of $R$
  and $[\alpha - h_2] \in \N$, that is $0 = (\alpha - h_2) \Cj{(\alpha - h_2)} =
  M(\alpha)$. Consequently, $P(\alpha) = M(\alpha) + R(\alpha) = 0$ and $P$ has
  the real factor $t-\alpha$ by Lemma~\ref{lem:zero-factor}.
\end{proof}

\begin{rmk}
  In our proof of Theorem~\ref{th:factor-polynomial-monic-universal} we appeal
  to the proof of
  Theorem~\ref{th:factor-polynomial-monic-independent-coefficients} whose
  assumptions are slightly different. This is admissible: The assumed
  independence of coefficients in
  Theorem~\ref{th:factor-polynomial-monic-independent-coefficients} implies
  independence of the coefficients of the remainder polynomial $R$. In
  Theorem~\ref{th:factor-polynomial-monic-universal}, this is not a conclusion
  but an assumption.
\end{rmk}

The considerations on remainder polynomials above allow to translate the
condition in Theorem~\ref{th:factor-polynomial-monic-universal}, that there be a
remainder polynomial of degree one, to the possibility to find an interpolation
data set $\{ (t_i, P(t_i)),$ $(t_j, P(t_j)) \}$ such that the according
interpolation polynomial parameterizes a real line. This is not possible
precisely if each interpolation polynomial parameterizes a point or a non-real
line and yields a profound geometrical interpretation of the equality and
inequality conditions in Theorem~\ref{th:factor-polynomial-b-real-c-nonreal} and
Theorem~\ref{th:factor-polynomial-b-nonreal-c-nonreal}, respectively. It also
clarifies the cause of non-factorizability in these theorems.

Although the norm polynomial $P \Cj{P}$ might have four distinct roots $t_1$,
$t_2$, $t_3$, $t_4 \in \C$, the line segment parameterized by $P$ in
Theorem~\ref{th:factor-polynomial-b-real-c-nonreal} or
Theorem~\ref{th:factor-polynomial-b-nonreal-c-nonreal} intersects $\N$ only at
two distinct (not necessarily real) points. Hence, two of the four points
represented by $P(t_1)$, $P(t_2)$, $P(t_3)$ and $P(t_4)$ coincide, respectively.
We set $P_\ell \coloneqq P(t_\ell)$ for $\ell \in \{1,2,3,4\}$ and, without loss
of generality, assume $[P_1] = [P_3]$ and $[P_2] = [P_4]$. The coefficients of
$P_1$ or $P_2$ might be non-real. If so, the points $[P_1]$ and $[P_2]$ are
complex conjugates, respectively, in the sense of
\cite[Section~5.1]{casas-alvero14}. At any rate, these two points are also the
intersection points of $\N$ and the line segment's support line, that is the
line $[1] \vee [c]$ in Theorem~\ref{th:factor-polynomial-b-real-c-nonreal} or
the line $[1] \vee [b]$ in
Theorem~\ref{th:factor-polynomial-b-nonreal-c-nonreal}.

\subsubsection*{Geometric interpretation of
  Theorem~\ref{th:factor-polynomial-b-real-c-nonreal}}
The curve parameterized by $P$ is a half-line with start-point $c$ and direction
$1$ in pseudo-Euclidean space $\SPQ$ and one of the two projective line segments
with endpoints $[1]$ and $[c]$ in $\P(\SPQ)$. All factorizability conditions of
Theorem~\ref{th:factor-polynomial-b-real-c-nonreal} pertain to~$c$ but their
geometric interpretation indirectly also depends on the projective point $[1]
\in \P(\SPQ)$ because of assumptions we made ``without loss of generality'' (in
particular monicity of~$P$).
\begin{itemize}
\item The sign of $c\Cj{c}$ distinguishes between points of the null quadric
  $\N$, its exterior, and its interior. The condition $c\Cj{c} \geq 0$ means,
  for example that the end point $[c]$ is on $\N$ or in the interior of $\N$.
\item The sign of $\Vector{c}\Cj{\Vector{c}}$ distinguishes between points of
  the ``asymptotic'' cone with vertex $[1]$ over the intersection of $\N$ with
  the ``ideal'' plane $c_0 = 0$, its exterior, and its interior. For example,
  points satisfying $c\Cj{c} \ge 0$ and $\Vector{c}\Cj{\Vector{c}} < 0$ lie
  inside the null quadric and outside its asymptotic cone.
\item If $\Vector{c}\Cj{\Vector{c}} > 0$, then the points $[P_1]$ and $[P_2]$
  are complex conjugates. The real line connecting them can be parameterized by
  the interpolation polynomial with respects to the data set $\{ (t_1,P_1),$
  $(t_2,P_2) \}$ since $t_1$ and $t_2$ can be chosen as complex conjugates from
  the set $\{ t_1,t_2,t_3,t_4 \}$.
\item The two endpoints separate the line $[1] \vee [c]$ into two line segments.
  The sign of $c_0$ determines which segment is parameterized by $P$, where
  $c_0 < 0$ denotes the one intersecting the ideal plane.
\item If $\Vector{c}\Cj{\Vector{c}} \leq 0$, then the points $[P_1]$ and $[P_2]$
  are real. If in addition $c \Cj{c} < 0$, then either $t_1$ and $t_2$ or $t_3$
  and $t_4$ are non-real. Hence, none of the interpolation polynomials
  parameterize a real line: Those involving the pairs $(t_1, t_3)$, $(t_1, t_4)$,
  $(t_2, t_3)$ or $(t_2, t_4)$ parameterize non-real lines and those involving
  the pairs $(t_1,t_2)$ or $(t_3,t_4)$ are constant.
\item If $\Vector{c}\Cj{\Vector{c}} \leq 0$ and $c \Cj{c} \geq 0$ the sign of
  $c_0$ is crucial. Similar to the item above, $c_0 \geq 0$ yields only
  interpolation polynomials parameterizing a non-real line or a point.
  Conversely, $c_0 < 0$ implies that $t_1$, $t_2$, $t_3$ and $t_4$ are real and
  we can find interpolation polynomials parameterizing a real line, e.g. the one
  according to the data set $\{ (t_1,P_1),$ $(t_2,P_2) \}$.
 \end{itemize}

 With exception of the sign of $c_0$, all inequality conditions are of
 projective nature. In the affine space $\SPQ$, the sign of $c_0$ distinguishes
 between half-spaces. A natural framework for a unified geometric interpretation
 of all inequalities is \emph{oriented projective geometry}
 \cite{stolfi87,kirby02}.

\subsubsection*{Geometric interpretation of Theorem~\ref{th:factor-polynomial-b-nonreal-c-nonreal}}
Again, the polynomial $P$ parameterizes a projective line segment or possibly a
projective line in $\P(\SPQ)$. The endpoints of the line segment are $[e_{1,2}] =
[\pm 2 \mu \sqrt{\lambda + \mu^2} + 2 (\lambda + \mu^2) \mp b \sqrt{\lambda +
  \mu^2}]$. At any rate, the segment contains the point~$[1]$.
\begin{itemize}
\item Similar as in Theorem~\ref{th:factor-polynomial-b-real-c-nonreal}, the
  sign of $b\Cj{b} = \Vector{b} \Cj{\Vector{b}}$ distinguishes between lines or
  line segments with supporting line that lies on the null quadric's asymptotic
  cone or in the cone's interior/exterior.
\item The case $b \Cj{b} > 0$ is identical to the case $\Vector{c} \
  \Cj{\Vector{c}} > 0$ in Theorem~\ref{th:factor-polynomial-b-real-c-nonreal}.
\item If $b \Cj{b} = 0$ or $b \Cj{b} < 0$, then $[P_1]$ and $[P_2]$ are real.
  The respective conditions $\lambda < 0$ or $b \Cj{b} + 4 \lambda \le 0$ and $b
  \Cj{b} + 4 \lambda \le 4 \mu \sqrt{- b \Cj{b}} \le -(b \Cj{b} + 4 \lambda)$
  ensures that $t_1$, $t_2$, $t_3$ and $t_4$ are real as well whence there
  exists an interpolation polynomial parameterizing a real line. Otherwise, the
  parameter values $t_1$, $t_2$, $t_3$, $t_4$ are non-real and all interpolation
  polynomials parameterize non-real lines or points.
\item Because $1$ and $b$ are linearly independent, the condition $\lambda +
  \mu^2 = 0$ is necessary and sufficient for the existence of a real zero of
  $P$. This is equivalent to $P$ being a linear parametrization of the line $[1]
  \vee [b]$ multiplied with a linear real polynomial. This is a trivial case
  which we have excluded.
\end{itemize}

Based on these considerations we can state a simple geometric criterion for the
existence of factorizations in case of monic polynomials with dependent
coefficients. This covers all polynomials $P = at^2 + bt + c$ in
\eqref{eq:polynomial} where $a$, $b$ and $c$ are linearly dependent and the
leading coefficient $a$ is invertible since multiplication with $a^{-1}$ yields
a monic polynomial. Moreover, if $a$, $b$ and $c$ are linearly dependent so are
$1$, $a^{-1}b$ and $a^{-1}c$ and vice versa. In fact we even covered those cases
where the leading coefficient $a$ of $P$ is not invertible but the curve
parameterized by $P$ is not contained in the null quadric $\N$, i.e. the norm
polynomial does not vanish. As long as there is a point on the curve which is
not contained in $\N$, one can apply a proper parameter transformation to $P$
such that the leading coefficient becomes invertible. Factorizability of the
thus obtained polynomial guarantees factorizability of the initial one.

\begin{thm}
  \label{th:geometric-characterization}
  Assume that the polynomial $P \in \SPQ[t]$ is of degree two, has linearly
  dependent coefficients, no real factor of positive degree, and a non-vanishing
  norm polynomial. Denote by $L$ the vector sub-space (of dimension two) spanned
  by the coefficients of $P$. There exists a factorization of $P$ if and only if
  the point sets $\{[P(t)] \mid t \in \R \cup \{\infty\}\}$ (line segment
  parameterized by $P$) and $[L]$ intersect $\N$ in the same number of points.
\end{thm}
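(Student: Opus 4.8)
The plan is to reduce to the monic normal form and then read factorizability off Theorem~\ref{th:factor-polynomial-monic-universal}, re-interpreting its remainder-polynomial criterion geometrically. Since the norm polynomial does not vanish, the curve is not contained in $\N$, so (as explained just before the theorem) a parameter transformation makes the leading coefficient invertible; multiplying by its inverse and shifting $t$ I may assume $P = t^2 + bt + c$ with $\Scalar{b} = 0$. None of these operations changes the point set $\{[P(t)]\}$, the subspace $L$, the quadric $\N$, or factorizability, so the two integers compared in the statement are unaffected. As $P$ has no real factor, $b \notin \R$, and linear dependence of the coefficients lets me write $c = \lambda + \mu b$ with $\lambda$, $\mu \in \R$; thus $L = \langle 1, b \rangle$, $[L] = [1] \vee [b]$, and in the basis $(1,b)$ the curve reads $[P(t)] = [(t^2 + \lambda) : (t + \mu)]$, which visibly lies on $[L]$.

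First I would pin down the right-hand count. A point $[\alpha + \beta b]$ of $[L]$ lies on $\N$ exactly when $(\alpha + \beta b)\Cj{(\alpha + \beta b)} = \alpha^2 + \beta^2\, b\Cj{b} = 0$ (using $\Scalar{b}=0$), so $[L]$ meets $\N$ in $0$, $1$, or $2$ points according as $b\Cj{b}$ is positive, zero, or negative. The curve meets $\N$ precisely at the real roots of $P\Cj{P}$, and every such image point lies in $[L] \cap \N$; hence the number of points in which the curve meets $\N$ never exceeds the number for $[L]$, and the content of the theorem is that equality is equivalent to factorizability.

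For the left-hand side I would invoke Theorem~\ref{th:factor-polynomial-monic-universal}: since $P$ has no real factor, it factors if and only if some remainder polynomial $R = P - M$, for a real quadratic factor $M = (t-t_i)(t-t_j)$ of $P\Cj{P}$, has degree one with linearly independent coefficients. Because the curve lies on $[L]$, the coefficients of such an $R$ lie in $L$, so $R$ parameterizes a real line which is forced to be $[L]$ itself; a remainder with dependent coefficients parameterizes only a point of $[L]$. Reading $R$ as the interpolation polynomial through $(t_i, P(t_i))$ and $(t_j, P(t_j))$, as in Section~\ref{sec:geometric-interpretation}, the independent case occurs exactly when this data determines $[L]$: either $[P(t_i)] \neq [P(t_j)]$ are two distinct points of $[L] \cap \N$, or $t_i = t_j$ and the tangent of the parametrization at the regular point $[P(t_i)]$ is $[L]$. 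Matching this against the count above, the cases $b\Cj{b} > 0$ and $b\Cj{b} < 0$ are clean: in the first, absence of real points forces $M$ to come from a complex-conjugate pair of parameters, which still yields a real line through two distinct conjugate image points while both counts are $0$; in the second, the arc must reach both points of $[L]\cap\N$ to supply two distinct interpolation nodes, so factorizability matches $n = 2$ on both sides.

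The step I expect to be the main obstacle is the tangent case $b\Cj{b} = 0$, where $[L] \cap \N$ is the single point $X = [b]$ and every interpolation node is forced to equal $X$. Here a real-line remainder can only come from the tangent construction $t_i = t_j$, which is non-degenerate precisely when $X$ is reached at a regular parameter of $t \mapsto [P(t)]$ rather than at a branch point (a parameter with $P'(t) \parallel P(t)$). A careful analysis of the multiplicities of the real roots of $P\Cj{P} = (t^2 + \lambda)^2$ and of the branch parameters $t = -\mu \pm \sqrt{\mu^2 + \lambda}$ is what ties ``the curve and $[L]$ meet $\N$ in the same number of points'' to the existence of such a remainder, and it is exactly here that the bookkeeping of the point count must be done with care. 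I would treat this boundary as in the proof of Theorem~\ref{th:factor-polynomial-b-nonreal-c-nonreal}, tracking when a repeated real node produces a genuine tangent line versus collapsing to the point $X$.
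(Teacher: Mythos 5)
Your reduction contains a false inference that removes half of the theorem from your argument: ``as $P$ has no real factor, $b \notin \R$'' does not follow. The polynomial $P = t^2 + \qi$ satisfies every hypothesis (degree two; coefficients $1$, $0$, $\qi$ linearly dependent; no real factor, since $r^2 + \qi \neq 0$ for all $r \in \R$; norm polynomial $t^4 + 1 \neq 0$), yet its linear coefficient is real. In that situation your normal form $c = \lambda + \mu b$ is unavailable, $L = \langle 1, c\rangle$ rather than $\langle 1, b\rangle$, and the quadratic form governing $[L] \cap \N$ is $(\alpha + \beta c_0)^2 + \beta^2\, \Vector{c}\,\Cj{\Vector{c}}$ instead of $\alpha^2 + \beta^2\, b\Cj{b}$, so its case structure involves the three signs of $c\Cj{c}$, $\Vector{c}\,\Cj{\Vector{c}}$ and $c_0$. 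This is precisely the case the paper treats separately via Theorem~\ref{th:factor-polynomial-b-real-c-nonreal} and its own bullet list (asymptotic cone with vertex $[1]$, the half-space condition $c_0 < 0$); it cannot be absorbed into your $b \notin \R$ analysis and is simply absent from your proof. (A smaller imprecision: multiplying by $a^{-1}$ \emph{does} change the point set and $L$ --- it is a Clifford left translation --- although it preserves $\N$ and hence both intersection counts, which is what you actually need.)

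Second, the case you defer, $b\Cj{b} = 0$, is not a routine boundary but the one place where counting points cannot succeed at all, so ``treat it as in the proof of Theorem~\ref{th:factor-polynomial-b-nonreal-c-nonreal}'' does not close the argument. Take $\lambda = 0$, $\mu \neq 0$, e.g.\ $P = t^2 + (\qi + \qj)t + (\qi + \qj)$: all hypotheses of the theorem hold, the segment meets $\N$ exactly in the single point $[P(0)] = [\qi+\qj]$, and $[L] \cap \N$ is that same single point, so the two counts agree; yet system \eqref{eq:equation-system} has no solution here (Theorem~\ref{th:factor-polynomial-b-nonreal-c-nonreal} requires $\lambda < 0$ or $\lambda + \mu^2 = 0$), so $P$ admits no factorization. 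By contrast, for $\lambda < 0$ the counts are again $1 = 1$ and a factorization exists. What distinguishes the two situations is not any number of intersection points (with or without multiplicity) but whether the tangency point is reached at a branch parameter of the $2{:}1$ map $t \mapsto [P(t)]$ --- exactly the bookkeeping you postpone. So your plan must, in this case, fall back on the inequality analysis of Theorem~\ref{th:factor-polynomial-b-nonreal-c-nonreal} anyway, which is in fact how the paper proceeds: it does not argue through Theorem~\ref{th:factor-polynomial-monic-universal} at all, but derives the geometric criterion by reading off the conditions of Theorems~\ref{th:factor-polynomial-b-real-c-nonreal} and \ref{th:factor-polynomial-b-nonreal-c-nonreal} case by case. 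Your route via Theorem~\ref{th:factor-polynomial-monic-universal} is viable for the generic cases $b\Cj{b} > 0$ and $b\Cj{b} < 0$, but as written the proposal has the two gaps above, the first of which is an outright error.
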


The content of Theorem~\ref{th:geometric-characterization} is visualized in
Figure~\ref{fig:gfigure}. Images in the first and second row refer to the
geometric interpretation of Theorem~\ref{th:factor-polynomial-b-real-c-nonreal},
the last row refers to Theorem~\ref{th:factor-polynomial-b-nonreal-c-nonreal}.
Images in the first row and the first and second image in the last row
correspond to cases that admit factorizations. All other images correspond to
cases that don't.

\begin{figure}
  \centering
    \includegraphics{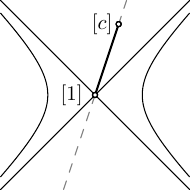}\qquad
    \includegraphics{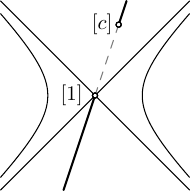}\qquad
    \includegraphics{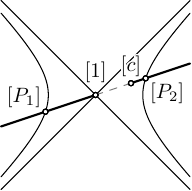}\\[3ex]
    \includegraphics{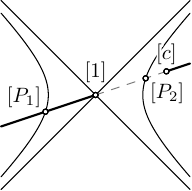}\qquad
    \includegraphics{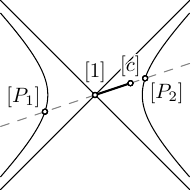}\qquad
    \includegraphics{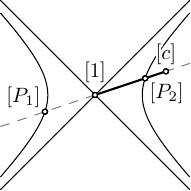}\\[3ex]
    \includegraphics{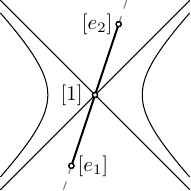}\qquad
    \includegraphics{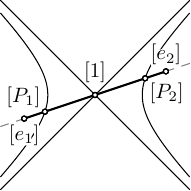}\qquad
    \includegraphics{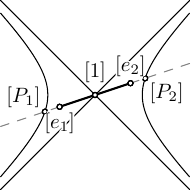}
\caption{Geometric interpretation of factorizability in case of dependent
      coefficients.}
    \label{fig:gfigure}
\end{figure}

\subsubsection*{Geometric Interpretation of Theorem~\ref{th:factor-polynomial-monic-independent-coefficients}}

In this, the coefficients of $P$ are independent whence it parameterizes a
(regular) conic section $\mathcal{C}$ in $\P(\SPQ)$. It intersects the null
quadric $\N$ in four points, not necessarily real or distinct. Nonetheless, a
suitable choice of a remainder polynomial (which corresponds to a suitable
choice of a line) is always possible: We may connect a generic pair of distinct
real intersection points, a pair of conjugate complex intersection points or
pick the tangent in a real intersection point of multiplicity at least two. The
most interesting case is that of $\mathcal{C}$ lying in a tangent plane of $\N$.
In this case, the intersection of $\mathcal{C}$ and $\N$ will always contain a
left and a right ruling. In the proof of
Theorem~\ref{th:factor-polynomial-monic-independent-coefficients} we have shown
that right ruling is always a suitable choice.

\begin{figure}
  \centering
  \includegraphics{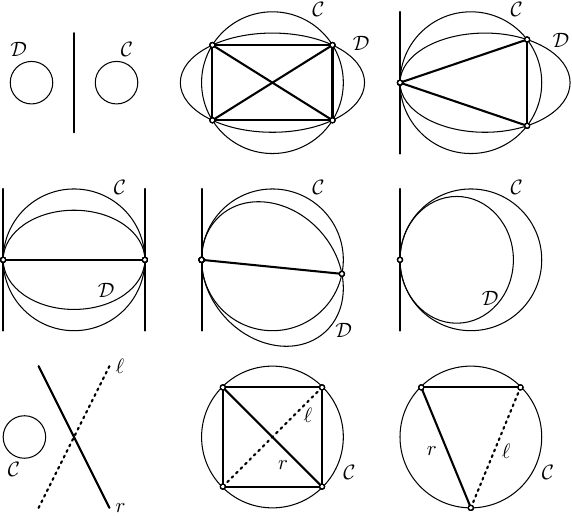}
  \caption{Geometric interpretation of
    Theorem~\ref{th:factor-polynomial-monic-independent-coefficients}.}
  \label{fig:conic-pencils}
\end{figure}

This is illustrated in Figure~\ref{fig:conic-pencils}. In the first and second
row, we assume that $\mathcal{C}$ is not in a tangent plane of $\N$ so that we
actually look at the intersection of two regular conics, $\mathcal{C}$ and the
intersection conic $\mathcal{D}$ of $\N$ with the plane of $\mathcal{C}$. For
diverse relative positions of $\mathcal{C}$ and $\mathcal{D}$, suitable
connecting lines are drawn in bold line style. In the top-left image, all
intersection points are non-real but a real connecting line does exist.

The bottom row illustrates cases where $\mathcal{C}$ is in a tangent plane of
$\N$. Thus, the plane of $\mathcal{C}$ intersects $\N$ in a left ruling $\ell$
and a right ruling $r$. Once again, suitable choices of lines are drawn in bold
and potentially invalid lines in dotted line style.

\subsection{Factorization of Polynomials with Non-Invertible Leading
  Coefficient}
\label{sec:factor-non-invertible}

So far, we considered polynomials $P = at^2 + bt + c$ in \eqref{eq:polynomial}
with invertible leading coefficient and non-vanishing norm polynomial. Taking
into account the already explained possibility of re-parameterization just
before Theorem~\ref{th:geometric-characterization}, the only missing case in our
discussion so far is that the curve parameterized by $P$ is contained in $\N$.
This is the case if and only if the norm polynomial vanishes:
\begin{equation}
  \label{eq:normpolynomial}
  P \Cj{P} = a \Cj{a} t^4 + (a \Cj{b} + b \Cj{a}) t^3 + (a \Cj{c} + c \Cj{a} + b \Cj{b}) t^2 + (b \Cj{c} + c \Cj{b}) t + c \Cj{c} = 0.
\end{equation}
It will turn out that factorizations always exist. In our investigation, we
distinguish two cases:
\begin{itemize}
\item $a$, $b$, $c \in \SPQ$ linearly dependent $\leadsto$
  Theorem~\ref{th:factor-polynomial-nonmonic-dependent-coefficients}
\item $a$, $b$, $c \in \SPQ$ linearly independent $\leadsto$
  Theorem~\ref{th:factor-polynomial-nonmonic-independent-coefficients}
\end{itemize}

We start the discussion by assuming that the coefficients $a$, $b$ and $c$ are
linearly dependent. Hence, the corresponding points $[a]$, $[b]$ and $[c]$ lie
on a line. Since the norm polynomial vanishes this line is a ruling of the null
quadric $\N$.

\begin{thm}
  \label{th:factor-polynomial-nonmonic-dependent-coefficients}
  The polynomial $P = a t^2 + b t + c \in \SPQ[t]$ with $P \Cj{P} = 0$ and
  linearly dependent coefficients admits a factorization.
\end{thm}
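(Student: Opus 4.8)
The plan is to show that the hypotheses force the coefficients onto a single ruling of $\N$ and then to read off two linear factors from the algebraic description of that ruling. First I would record the structural consequences of $P\Cj{P}=0$. Comparing leading coefficients in \eqref{eq:normpolynomial} gives $a\Cj{a}=0$, so $a\neq 0$ is a genuine zero divisor and $[a]\in\N$. Let $L$ be the span of $a$, $b$, $c$; by hypothesis $\dim L\le 2$, and the rational curve $t\mapsto[P(t)]$ takes its values in $[L]$ and, since $P\Cj{P}\equiv 0$, entirely inside $\N$. If $\dim L=1$ the coefficients are real multiples of $a$. Otherwise $[L]$ is a projective line, and a non-constant curve meets it in infinitely many points; as a non-null line meets $\N$ in at most two points, $[L]$ must be a ruling of $\N$ (cf.\ Lemma~\ref{lem:null-lines}). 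This splits the argument into a proportional case and a ruling case.

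In the proportional case write $b=\beta a$, $c=\gamma a$ with $\beta,\gamma\in\R$, so that $P=a(t^2+\beta t+\gamma)$. If $\beta^2\ge 4\gamma$ the real quadratic has real roots $r_1$, $r_2$ and $P=a(t-r_1)(t-r_2)$. If $\beta^2<4\gamma$ I would realise the complex-conjugate roots inside $\SPQ$ using $\qi^2=-1$: with $u=-\beta/2$ and $v=\sqrt{\gamma-u^2}$ one checks that $(t-u-v\qi)(t-u+v\qi)=t^2+\beta t+\gamma$, the cross terms cancelling because $u$, $v$ are real and commute with $\qi$, so $P=a(t-h_1)(t-h_2)$ with $h_{1,2}=u\pm v\qi$.

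For the ruling case I would use conjugation to reduce to a single family. Since $h\mapsto\Cj{h}$ preserves $\N$ and interchanges its two families of rulings—it sends the left ruling through $[h]$ to the right ruling through $[\Cj{h}]$, by Theorem~\ref{th:left-right-ruling}—and since conjugates of factorizations are factorizations, it suffices to treat the case where $[L]$ is a right ruling through $[a]$, a left ruling being handled by factoring $\Cj{P}$ and conjugating back (the resulting factorization of $P$ then carries $a$ in its right factor). For a right ruling, Corollary~\ref{cor:affine-two-plane-of-zeros} supplies $\beta,\gamma\in\SPQ$ with $b=a\beta$ and $c=a\gamma$, whence $P=a(t^2+\beta t+\gamma)$ with $a$ cleanly factored out on the left. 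It then remains to find a right zero $h_2$ of the monic quadratic factor; by Lemma~\ref{lem:zero-factor} this produces a right factor $t-h_2$, and left division yields $P=a(t-h_1)(t-h_2)$.

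I expect the genuine difficulty to lie in the zero-divisor bookkeeping of the ruling case. Writing $b=a\beta$, $c=a\gamma$ only determines $\beta$, $\gamma$ modulo the two-dimensional right kernel $\{x:ax=0\}$ (which contains $\Cj{a}$), and producing the remaining root amounts to solving $a(h^2+\beta h+\gamma)=0$, i.e.\ forcing $h^2+\beta h+\gamma$ into that kernel—two real conditions on $h\in\SPQ$. I would establish solvability by exploiting exactly this kernel freedom, in the spirit of the argument already used for the independent case in Theorem~\ref{th:factor-polynomial-monic-independent-coefficients}, and then verify that the leading coefficient of the resulting linear left factor lies in $a\SPQ$, so that the factor genuinely has the shape $a(t-h_1)$. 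The subtle point to keep honest is that, for a left ruling, the non-invertible leading coefficient is unavoidably carried by the right linear factor rather than the left, so the factorization is a product of two linear polynomials but not of the literal form $a(t-h_1)(t-h_2)$.
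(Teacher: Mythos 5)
You have the right skeleton, and it is essentially the paper's: reduce by conjugation to the case where $[a]$, $[b]$, $[c]$ lie on a right ruling, use Corollary~\ref{cor:affine-two-plane-of-zeros} to write $b=a\beta$, $c=a\gamma$ and hence $P=a(t^2+\beta t+\gamma)$, and dispose of the proportional case by factoring a real quadratic over $\langle 1,\qi\rangle_\R\cong\C$. Your closing remark is also a genuinely good catch: if $[a]$, $[b]$, $[c]$ lie on a left ruling and do not all coincide, then $b$ or $c$ lies outside $a\SPQ$ (the left and right rulings through $[a]$ meet only in $[a]$), so a factorization of the literal shape $a(t-h_1)(t-h_2)$ cannot exist; the conjugation step---in the paper just as in your proposal---produces a product of linear polynomials with $a$ carried by the rightmost factor, and the theorem must be read in that broader sense. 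On the other hand, your worry about the left factor in the right-ruling case is unfounded: if $h_2$ is any right zero of $P=a(t^2+\beta t+\gamma)$, then comparing coefficients of $t$ in $P=(at+q_0)(t-h_2)$ gives $q_0=a(\beta+h_2)\in a\SPQ$ automatically.

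The genuine gap is the one step you defer: you never prove that the required zero exists. Solving $a(h^2+\beta h+\gamma)=0$ is indeed ``two real conditions on four unknowns,'' but equation counting is not an argument in this ring---the whole paper is about quadratic systems like \eqref{eq:equation-system} that may have no solution---and the proof you invoke, that of Theorem~\ref{th:factor-polynomial-monic-independent-coefficients}, rests on linear independence of the coefficients and on the remainder polynomial tracing a null line, neither of which is available for $t^2+\beta t+\gamma$. The choice of representative modulo $\{x : ax=0\}$ is not bookkeeping but the crux. A concrete illustration: for $a=1+\qj$ one has $a(\qj-1)=0$, so $\gamma=\qi$ and $\gamma'=\qi-2(\qj-1)=2+\qi-2\qj$ represent the same $c=a\qi$; yet $t^2+\gamma'$ has no split quaternion zero at all by Theorem~\ref{th:factor-polynomial-b-real-c-nonreal} (since $\Vector{\gamma'}\,\Cj{\Vector{\gamma'}}=-3<0$, $\gamma'\Cj{\gamma'}=1\ge 0$ and $\RE(\gamma')=2>0$), while $t^2+\qi$ factors. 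What closes this gap in the paper is precisely the choice you skip: Theorem~\ref{th:affine-two-plane-of-zeros} with $\lambda=\mu=0$ places $\beta$ and $\gamma$ in the subring $\langle 1,\qi\rangle_\R$, where every nonzero element has positive norm and every non-real element has a vector part of positive norm; after the real parameter shift $t\mapsto t-\Scalar{\beta}/2$, the monic quadratic then satisfies the first condition of Theorem~\ref{th:factor-polynomial-b-real-c-nonreal} or of Theorem~\ref{th:factor-polynomial-b-nonreal-c-nonreal} (or falls under Lemma~\ref{lem:factor-polynomial-b-real-c-real} in the degenerate real case), hence factors, and $P=a(t-h_1)(t-h_2)$ follows with no further solvability analysis. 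Until you either make such a choice or supply an independent existence proof for the kernel condition, your ruling case is a plan, not a proof.
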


In the ensuing proof of
Theorem~\ref{th:factor-polynomial-nonmonic-dependent-coefficients} it is
possible that the coefficients $b$ or $c \in \SPQ$ vanish whence the points
$[b]$ and $[c] \in \P(\SPQ)$ become undefined. For the sake of readability, we
do not always take into account this possibility in our proof which,
nonetheless, is also valid for these special cases.

\begin{proof}[Proof of Theorem~\ref{th:factor-polynomial-nonmonic-dependent-coefficients}]
  Without loss of generality, we may assume that $[a]$, $[b]$ and $[c]$ lie on a
  right ruling since $P$ is factorizable if and only if its conjugate $\Cj{P} =
  \Cj{a} t^2 + \Cj{b} t + \Cj{c}$ is factorizable and conjugation exchanges
  right rulings and left rulings. The proof is subdivided into four different
  cases.

  For the first two cases, we assume that $b = 0$. If $c$ is a real multiple of
  $a$, which is equivalent to $[a] = [c]$, then $P$ can be written as $P = a
  \hat{P}$ with $\hat{P} \in \R[t]$. Since $\hat{P}$ admits a factorization in
  terms of Lemma~\ref{lem:factor-polynomial-b-real-c-real}, so does $P$.

  If $[a] \neq [c]$ then $[a] \vee [c]$ is a right ruling of $\N$. Due to
  Theorem~\ref{th:affine-two-plane-of-zeros} there exists a split quaternion $h$
  such that $c = a h$. We write $P = a (t^2 + h)$ and show that $t^2 + h$ admits
  a factorization. There are two degrees of freedom in the choice of $h$, namely
  the two real parameters $\lambda$ and $\mu$ of
  Theorem~\ref{th:affine-two-plane-of-zeros}. We set them both equal to
  zero. Then $h = h_0 + h_1 \qi$ is an element of the sub-ring $\langle 1, \qi
  \rangle_\R \subset \SPQ$. The assumption $[a] \neq [c]$ yields $h_1 \neq 0$.
  We have $h \Cj{h} = h_0^2 + h_1^2 > 0$ and $\Vector{h} \Cj{\Vector{h}} = h_1^2
  > 0$. Hence, $t^2 + h$ is factorizable by means of
  Theorem~\ref{th:factor-polynomial-b-real-c-nonreal}.

  We are left with the case that $b \neq 0$ and $[a]$, $[b]$, $[c]$ lie on a
  right ruling of $\N$.

  If $[a] = [b]$ we may write $b = \alpha a$ with $\alpha \in \R \setminus
  \{0\}$. Moreover, by Theorem~\ref{th:affine-two-plane-of-zeros}, there exist
  $h = h_0 + h_1\qi$ such that $c = ah$ whence $P = a(t^2 + \alpha t + h)$. By a
  suitable parameter transformation, we can eliminate the coefficient of $t$ in
  $t^2 + \alpha t + h$ while preserving the vector part $\Vector{h}$ of the
  constant coefficient. Existence of a factorization is once more guaranteed by
  Theorem~\ref{th:factor-polynomial-b-real-c-nonreal}.

  If $[a ] \neq [b]$, we chose, again according to
  Theorem~\ref{th:affine-two-plane-of-zeros}, $h = h_0 + h_1 \qi$ such that $b =
  a h$. Moreover, there exist $\alpha, \beta \in \R$ such that $c = \alpha a +
  \beta b$ due to linear dependency of $a$, $b$ and $c$. We can write $P = a
  (t^2 + h t + \alpha + \beta h)$ and since $h \Cj{h} > 0$, the polynomial $t^2
  + h t + \alpha + \beta h$ fulfills the condition of
  Theorem~\ref{th:factor-polynomial-b-nonreal-c-nonreal} to be factorizable.
\end{proof}

\begin{rmk}
  In the proof of Theorem
  \ref{th:factor-polynomial-nonmonic-dependent-coefficients}, we set the two
  real parameters $\lambda$ and $\mu$ equal to zero. But there are infinitely
  many possible choices for these parameters according to
  Theorem~\ref{th:affine-two-plane-of-zeros}. The crucial ingredient in the
  proof is that the norm of $h$ or $\IM h$ is strictly positive. Since these
  norms depend continuously on $\lambda$ and $\mu$, strict positivity is
  preserved for infinitely many choices of $\lambda$ and $\mu$. Hence, there are
  infinitely many factorizations for the second, third and fourth case. In the
  first case ($b = 0$, $[a] = [c]$) existence of infinitely many factorizations
  is guaranteed by Corollary~\ref{cor:factor-polynomial-b-real-c-real}.
\end{rmk}

Finally, we present the missing factorization result for split quaternion
polynomials with vanishing norm polynomial and independent coefficients.

\begin{thm}
  \label{th:factor-polynomial-nonmonic-independent-coefficients}
  The polynomial $P = a t^2 + b t + c \in \SPQ[t]$ with $P \Cj{P} = 0$, $a \neq
  0$ and linearly independent coefficients admits a factorization.
\end{thm}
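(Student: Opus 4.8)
The plan is to reduce factorizability to the existence of a single right zero. By Lemma~\ref{lem:zero-factor} it suffices to produce an $h_2\in\SPQ$ with $P(h_2)=a h_2^2+b h_2+c=0$; left division then gives the factorization $P=(a t+b+a h_2)(t-h_2)$ into two linear polynomials, now with leading coefficients $a$ and $1$. The essential difficulty, and the reason this case is treated separately, is that $P\Cj{P}=0$ forces $a\Cj{a}=0$, so $a$ is a genuine zero divisor: we cannot normalise $P$ to a monic polynomial, and the remainder-polynomial/norm-factor machinery of the earlier sections is unavailable. Geometrically $P\Cj{P}=0$ means the regular conic $\mathcal{C}=\{[P(t)]\}$ lies on $\N$; since a smooth conic cannot lie in a pair of lines, the plane $[a]\vee[b]\vee[c]$ is not tangent to $\N$ and $\mathcal{C}$ meets each ruling exactly once. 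This picture is the motivation, but the actual engine I would use is algebraic.

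The key step is to left-multiply by $\Cj{a}$. Because $\Cj{a}a=a\Cj{a}=0$, the quadratic term is killed and $\Cj{a}P=(\Cj{a}b)\,t+\Cj{a}c$ is linear, and every right zero of $P$ is a right zero of it. First I would record the incidences coming from the five vanishing coefficients of $P\Cj{P}$: namely $[a],[c]\in\N$, $\RE(a\Cj{b})=\RE(b\Cj{c})=0$, and $b\Cj{b}=-(a\Cj{c}+c\Cj{a})$. From these one checks that $\Cj{a}b$ and $\Cj{a}c$ are both null (e.g. $(\Cj{a}b)\Cj{(\Cj{a}b)}=(b\Cj{b})\Cj{a}a=0$) and that $q(\Cj{a}b,\Cj{a}c)=\tfrac12\Cj{a}(b\Cj{c}+c\Cj{b})a=0$, so both coefficients lie on the single right ruling $\{[r]\mid a r=0\}$ through $[\Cj{a}]$. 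Hence $(\Cj{a}b)x=-\Cj{a}c$ is solvable (Corollary~\ref{cor:affine-two-plane-of-zeros}), and Theorem~\ref{th:affine-two-plane-of-zeros} parameterises its solutions as $h_2=u+\lambda\Cj{r_1}+\mu\Cj{r_1}\qi$ with $r_1:=\Cj{a}b$ and $\lambda,\mu\in\R$.

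It remains to pin $(\lambda,\mu)$ so that $P(h_2)=0$, and two observations make this a finite linear problem. Setting $v:=\lambda\Cj{r_1}+\mu\Cj{r_1}\qi$ and using $\Cj{r_1}=\Cj{b}a$ together with $a\Cj{b}=-b\Cj{a}$ and $\Cj{a}a=0$, one finds $a v=a\Cj{b}a(\lambda+\mu\qi)=-b\Cj{a}a(\lambda+\mu\qi)=0$. Consequently $a h_2^2=a u^2+(a u)v$ and $b h_2=b u+(b\Cj{b})\,a(\lambda+\mu\qi)$ with $b\Cj{b}\in\R$, so $P(h_2)$ is \emph{affine-linear} (not quadratic) in $(\lambda,\mu)$. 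Moreover $\Cj{a}P(h_2)=0$ holds identically, so $P(h_2)$ always lies in the two-dimensional space $\{y\mid \Cj{a}y=0\}=\{a z\mid z\in\SPQ\}$, and $P(h_2)=0$ is therefore only two real scalar conditions. Thus choosing $(\lambda,\mu)$ amounts to solving a system of two real affine-linear equations in two real unknowns.

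The hard part will be showing this $2\times 2$ system is solvable, i.e. that its coefficient matrix — governed by $[a u\Cj{b}+b\Cj{b}]\,a(\lambda+\mu\qi)$ — is non-singular, or, if singular, that the system stays consistent. I would compute it explicitly and argue that its degeneration would force an incidence forbidden by the hypotheses, exactly in the spirit of the closing argument of Theorem~\ref{th:factor-polynomial-monic-independent-coefficients}, where singularity was tied to dependence of positive parts and then excluded because a null line cannot contain a point with vanishing positive part. Finally I would clear the degenerate sub-cases that escape the construction, chiefly $\Cj{a}b=0$ (equivalently $b\in\{a z\}$): here I would apply the same argument to the reversed polynomial $c t^2+b t+a$, whose right zeros are the inverses of those of $P$, swapping the roles of $a$ and $c$; the fully degenerate remainder, together with $b=0$ or $c=0$, either violates linear independence or falls back on Theorem~\ref{th:factor-polynomial-nonmonic-dependent-coefficients} and Lemma~\ref{lem:factor-polynomial-b-real-c-real}. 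I expect the non-singularity verification to be the only real work, the rest being incidence bookkeeping.
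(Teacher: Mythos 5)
Your reduction is set up correctly as far as it goes: left-multiplication by $\Cj{a}$ does confine all right zeros to the solution set of $(\Cj{a}b)x=-\Cj{a}c$, your nullity and orthogonality computations for $\Cj{a}b$ and $\Cj{a}c$ are correct, and $P$ restricted to that affine two-plane is indeed affine-linear with values in the two-dimensional space $a\SPQ$. But the argument stops exactly where the theorem lives: solvability of the resulting $2\times 2$ system is never proved, and your primary plan for it --- show the coefficient matrix is non-singular, in the spirit of the closing argument of Theorem~\ref{th:factor-polynomial-monic-independent-coefficients} --- cannot succeed, because under the hypotheses of this theorem that matrix \emph{always} vanishes. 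Concretely, take $P=\bigl((1+\qj)t+\qi+\qk\bigr)(t-\qi)=(1+\qj)t^2+2\qk t+1-\qj$; here $P\Cj{P}=0$ and $a,b,c$ are linearly independent, $r_1=\Cj{a}b=2(\qi+\qk)$, a particular solution of $(\Cj{a}b)x=-\Cj{a}c$ is $u=\tfrac12(\qi-\qk)$, and your coefficient is $w=\bigl(au\Cj{b}+b\Cj{b}\bigr)a=(2+2\qj-4)(1+\qj)=0$. This is no accident of the example: for any factorization $P=(at+b+ah)(t-h)$ (which exists, by the theorem), independence of $a$, $b$, $c$ forces the linear left factor to parameterize a \emph{left} ruling of $\N$ --- were it a right ruling, all three coefficients would lie in the two-dimensional space $a\SPQ$ --- and then every element of the two-plane $h+\{d:ad=0\}$ is a right zero, so the zero set fills your whole solution plane and the linear part of your system is identically zero. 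Thus everything collapses onto the fallback you mention only in passing, consistency of a singular system; but consistency at a particular solution $u$ is literally the assertion $P(u)=0$, i.e.\ the theorem itself, and no argument for it is offered.

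For comparison, the paper's proof avoids the system entirely and is two steps long: from the vanishing coefficients of $P\Cj{P}$, if $b\Cj{b}=0$ then $[a]\vee[b]$ and $[b]\vee[c]$ would be null lines inside the plane of the regular conic parameterized by $P$, which is impossible since that plane meets $\N$ exactly in the conic; hence $b$ is invertible, and $h=-b^{-1}c$ is verified to be a right zero in one line using $\Cj{b}c=-\Cj{c}b$ and $c\Cj{c}=0$, after which Lemma~\ref{lem:zero-factor} finishes. Note that the same smooth-conic argument shows your degenerate cases $\Cj{a}b=0$ and $\Cj{a}c=0$ simply cannot occur (they would place $[a]\vee[b]$, resp.\ $[a]\vee[c]$, inside $\N$), so no reversal trick is needed; that is fortunate, because the trick as stated is unsound: right zeros in this setting need not be invertible (in the example above, $u=\tfrac12(\qi-\qk)$ is a zero divisor), so the claimed correspondence between zeros of $ct^2+bt+a$ and inverses of zeros of $P$ breaks down precisely where you would need it.
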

\begin{proof}
  The condition $P \Cj{P} = 0$ implies that each coefficient in Equation
  \eqref{eq:normpolynomial} vanishes. In particular, we have
  \begin{equation}
    \label{eq:normpolynomial-quadratic-coefficient}
    \begin{gathered}
    0 = b \Cj{a} + a \Cj{b} = 2 \, q(b,a) = 2 \, q(\Cj{b},\Cj{a}) = \Cj{b} a + \Cj{a} b,\\
    0 = b \Cj{c} + c \Cj{b} = 2 \, q(b,c) = 2 \, q(\Cj{b},\Cj{c}) = \Cj{b} c + \Cj{c} b.
    \end{gathered}
  \end{equation}
  If $b$ is not invertible, i.e. $b \Cj{b} = 0$,
  Equation~\eqref{eq:normpolynomial-quadratic-coefficient} implies that not only
  the points $[a]$, $[b]$, $[c] \in \P(\SPQ)$ are contained in $\N$, but also the
  lines $[a] \vee [b]$, $[b] \vee [c]$. This is not possible because $P$
  parameterizes a non-singular planar section of~$\N$.

  Hence $b$ is invertible and the split quaternion $h \coloneqq -b^{-1} c$ is a
  zero of $P$:
  \begin{align*}
    P(h) &= a h^2 + b h + c = a (- b^{-1} c)^2 + b (- b^{-1} c) + c = \\
         &= \frac{1}{(b \Cj{b})^2} a \Cj{b} c \Cj{b} c \overset{\eqref{eq:normpolynomial-quadratic-coefficient}}{=} - \frac{1}{(b \Cj{b})^2} a \Cj{b} \underbrace{c \Cj{c}}_{=0} b = 0.
  \end{align*}
  By Lemma~\ref{lem:zero-factor}, $t - h$ is a right factor of $P$ and a
  factorization exists.
\end{proof}

Theorem~\ref{th:factor-polynomial-nonmonic-independent-coefficients} in
combination with
Theorem~\ref{th:factor-polynomial-monic-independent-coefficients} or
Theorem~\ref{th:factor-polynomial-nonmonic-dependent-coefficients} implies a
corollary each.
\begin{cor}
  \label{cor:independent}
  Any quadratic split quaternion polynomial with linearly independent
  coefficients admits a factorization.
\end{cor}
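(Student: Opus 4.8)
The plan is to reduce the statement to the two independent-coefficient theorems already established, splitting on whether the norm polynomial vanishes identically. Write $P = at^2 + bt + c$ with $a$, $b$, $c \in \SPQ$ linearly independent; since a linearly independent set contains no zero vector, $a \neq 0$ and $P$ is genuinely quadratic.

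First I would dispose of the case $P\Cj{P} = 0$. Here the curve $t \mapsto [P(t)]$ is contained in the null quadric $\N$, and the hypotheses of Theorem~\ref{th:factor-polynomial-nonmonic-independent-coefficients} ($P\Cj{P}=0$, $a \neq 0$, independent coefficients) hold verbatim, so $P$ admits a factorization. It remains to treat $P\Cj{P} \neq 0$, where the real polynomial $P\Cj{P}\in\R[t]$ has only finitely many real roots. In this case I would arrange for the leading coefficient to be invertible and then invoke Theorem~\ref{th:factor-polynomial-monic-independent-coefficients}. If $a$ is already invertible, I pass directly to the monic polynomial $a^{-1}P = t^2 + a^{-1}b\,t + a^{-1}c$; its coefficients $1$, $a^{-1}b$, $a^{-1}c$ are linearly independent, since a real relation $\alpha + \beta a^{-1}b + \gamma a^{-1}c = 0$ yields, after left multiplication by $a$, the dependence $\alpha a + \beta b + \gamma c = 0$, forcing $\alpha=\beta=\gamma=0$. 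If $a$ is not invertible, I first apply the re-parametrization recorded just before Theorem~\ref{th:geometric-characterization}: choosing a real $t_0$ off the finitely many zeros of $P\Cj{P}$, so that $P(t_0)$ is invertible, and passing to $\tilde P(s) \coloneqq s^2 P(t_0 + 1/s) = P(t_0)\,s^2 + (2at_0+b)\,s + a$, whose factorizability implies that of $P$. The coefficients of $\tilde P$ still span $\langle a,b,c\rangle_\R$ and are therefore independent, and its leading coefficient $P(t_0)$ is invertible, so the previous paragraph applies to $\tilde P$.

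The only step needing care is this passage from a non-invertible leading coefficient to the monic situation. One must confirm that the reparametrization is an invertible change of the three coefficients, so that linear independence is inherited, and that it indeed produces an invertible leading coefficient. Both facts are immediate from the explicit expression for $\tilde P$ above: the constant and linear coefficients $a$ and $2at_0+b$ already recover $\langle a,b\rangle_\R$, the leading coefficient $P(t_0)=at_0^2+bt_0+c$ then recovers $c$, and invertibility of $P(t_0)$ is exactly the choice of $t_0$ away from the real roots of $P\Cj{P}$. With these verifications the two theorems combine to give a factorization in every case.
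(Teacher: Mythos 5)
Your proposal is correct and follows essentially the same route the paper intends: split on whether $P\Cj{P}$ vanishes, invoke Theorem~\ref{th:factor-polynomial-nonmonic-independent-coefficients} in the vanishing case, and otherwise reduce to Theorem~\ref{th:factor-polynomial-monic-independent-coefficients} via the reparametrization recorded before Theorem~\ref{th:geometric-characterization}. The only difference is that you spell out the details (preservation of linear independence under $a^{-1}\cdot$ and under the parameter change, existence of $t_0$ with $P(t_0)$ invertible) that the paper leaves implicit.
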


\begin{cor}
  \label{cor:vanishing}
  Any quadratic split quaternion polynomial with vanishing norm admits a
  factorization.
\end{cor}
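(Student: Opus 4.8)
Corollary \ref{cor:vanishing} states that any quadratic split quaternion polynomial with vanishing norm admits a factorization.

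The plan is to reduce the statement to the two factorization results already established for polynomials whose norm polynomial vanishes, namely Theorem~\ref{th:factor-polynomial-nonmonic-dependent-coefficients} and Theorem~\ref{th:factor-polynomial-nonmonic-independent-coefficients}. Writing the polynomial as $P = a t^2 + b t + c \in \SPQ[t]$, the assumption that $P$ is quadratic guarantees $a \neq 0$, while the hypothesis of vanishing norm is precisely $P \Cj{P} = 0$. These are exactly the standing assumptions shared by the two theorems, so the only work remaining is to verify that their hypotheses jointly exhaust all cases.

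First I would perform a case distinction according to whether the coefficients $a$, $b$, $c$ are linearly dependent or linearly independent over $\R$; these two cases are manifestly exhaustive. In the dependent case, $P$ satisfies exactly the hypotheses of Theorem~\ref{th:factor-polynomial-nonmonic-dependent-coefficients}, which directly supplies a factorization. In the independent case, the additional condition $a \neq 0$ (furnished by the degree-two assumption) together with $P\Cj{P} = 0$ places $P$ under the hypotheses of Theorem~\ref{th:factor-polynomial-nonmonic-independent-coefficients}, which again yields a factorization.

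Since both cases produce a factorization, the claim follows. There is no substantive obstacle here: the corollary is a purely logical consequence of the exhaustive dichotomy between dependent and independent coefficients, and the only point deserving a moment's care is to record that the degree-two hypothesis supplies the condition $a \neq 0$ required to invoke Theorem~\ref{th:factor-polynomial-nonmonic-independent-coefficients} in the independent case.
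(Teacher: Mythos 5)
Your proposal is correct and matches the paper's own (implicit) argument exactly: the paper derives Corollary~\ref{cor:vanishing} by combining Theorem~\ref{th:factor-polynomial-nonmonic-dependent-coefficients} and Theorem~\ref{th:factor-polynomial-nonmonic-independent-coefficients}, which is precisely your dichotomy on linear dependence of the coefficients. Your added remark that the degree-two hypothesis supplies $a \neq 0$ is a sound (and in the independent case even automatic) observation.
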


\section{Future Research}

We have presented a complete discussion of factorizability of quadratic
polynomials over the split quaternions and provided a geometric interpretation
in the (oriented) projective space over the split quaternions. A natural next
step is, of course, factorizability questions for higher degree polynomials. 
We expect to be able to re-use ideas and techniques of this paper. One thing
that is already clear is existence of non-factorizable polynomials of arbitrary
degree.

Other questions of interest include factorization results for different
algebras. One obstacle to generalizations is the lack of a suitable substitute
of quaternion conjugation, that is, a linear map that gives inverse elements up
to scalar multiples. Existence of such a map and its exploitation for
factorization on suitable and interesting sub-algebras are on our research
agenda as well. Preliminary results in Conformal Geometric Algebra already
exist.

\section*{Acknowledgment}

Johannes Siegele was supported by the Austrian Science Fund (FWF): P~30673 (Extended Kinematic Mappings and Application to Motion Design).
Daniel F. Scharler was supported by the Austrian Science Fund (FWF): P~31061 (The Algebra of Motions in 3-Space).
 
\bibliographystyle{elsarticle-harv}

\end{document}